%% file: Almost_Free.tex
\documentclass[a4paper,12pt]{article}

\usepackage{amscd}
\usepackage{amsfonts}
\usepackage{amsmath}
\usepackage{amssymb}
\usepackage{amsthm}
\usepackage[T1]{fontenc} % changing encode
\usepackage{here} % [h] for tables
\usepackage{mathrsfs} % \mathscr
\usepackage{txfonts} % colneqq
\usepackage[all]{xy} % xypic
\usepackage{algorithm} % pseudocode
\usepackage{algpseudocode} %pseudocode
\usepackage{diagbox} % \diagbox

\allowdisplaybreaks

\theoremstyle{plain}
\newtheorem{thm}{Theorem}[section]
\newtheorem{lmm}[thm]{Lemma}
\newtheorem{prp}[thm]{Proposition}
\newtheorem{crl}[thm]{Corollary}

\theoremstyle{definition}
\newtheorem{dfn}[thm]{Definition}
\newtheorem{rmk}[thm]{Remark}
\newtheorem{exm}[thm]{Example}

\newcommand{\vs}[1][0.2]{\vspace{#1in}\noindent\ignorespaces}
\newcommand{\ba}{\begin{array*}}
\newcommand{\ea}{\end{array*}}
\newcommand{\be}{\begin{eqnarray*}}
\newcommand{\ee}{\end{eqnarray*}}
\newcommand{\bi}{\begin{itemize}}
\newcommand{\ei}{\end{itemize}}
\newcommand{\bb}{\vs\begin{itembox}}
\newcommand{\eb}{\end{itembox}}
\newcommand{\bc}{\begin{center}}
\newcommand{\ec}{\end{center}}
\newcommand{\bs}{\vs\begin{screen}}
\newcommand{\es}{\end{screen}}

\def\ens#1{{\mathchoice{\left\{ #1 \right\}}{\{ #1 \}}{\{ #1 \}}{\{ #1 \}}}}
\def\set#1#2{{\mathchoice{\left\{ #1 \ \middle| \ #2 \right\}}{\{ #1 \mid #2 \}}{\{ #1 \mid #2 \}}{\{ #1 \mid #2 \}}}}
\def\r#1{\text{\rm #1}}

\def\Bigv#1{\left| #1 \right|}
\def\v#1{{\mathchoice{\Bigv{#1}}{| #1 |}{| #1 |}{| #1 |}}}
\def\Bign#1{\left\| #1 \right\|}
\def\n#1{{\mathchoice{\Bign{#1}}{\| #1 \|}{\| #1 \|}{\| #1 \|}}}

\def\ol#1{\overline{#1}{}}

\newcommand{\bC}{\mathbb{C}}

\newcommand{\bN}{\mathbb{N}}

\newcommand{\bR}{\mathbb{R}}

\newcommand{\cF}{\mathscr{F}}

\newcommand{\cL}{\mathscr{L}}

\newcommand{\cP}{\mathscr{P}}

\newcommand{\cU}{\mathscr{U}}
\newcommand{\cV}{\mathscr{V}}
\newcommand{\cW}{\mathscr{W}}

\newcommand{\rC}{\r{C}}

\newcommand{\rL}{\r{L}}

\newcommand{\rV}{\r{V}}

\newcommand{\C}{\bC}

\newcommand{\N}{\bN}

\newcommand{\R}{\bR}

\newcommand{\Cp}{\mathbb{C}_p}

\newcommand{\Ban}{\r{Ban}}

\newcommand{\cf}{\r{cf}}
\newcommand{\ch}{\r{ch}}

\newcommand{\Card}{\r{Card}}

\newcommand{\Comp}{\r{Comp}}

\newcommand{\Hom}{\r{Hom}}

\newcommand{\id}{\r{id}}

\newcommand{\Lim}{\r{Lim}}

\newcommand{\Ord}{\r{Ord}}

\newcommand{\rank}{\r{rank}}
\newcommand{\Reg}{\r{Reg}}

\newcommand{\Set}{\r{Set}}

\newcommand{\Suc}{\r{Suc}}

\newcommand{\ACF}{\r{ACF}}
\newcommand{\CF}{\r{CF}}
\newcommand{\FF}{\r{FF}}
\newcommand{\Norm}{\r{Norm}}

\algnewcommand\algorithmicbreak{{\bf break}}
\algnewcommand\Break{\algorithmicbreak{}}
\algnewcommand\algorithmiccontinue{{\bf continue}}
\algnewcommand\Continue{\algorithmiccontinue{}}

\title{Almost Free Non-Archimedean Banach Spaces and Relation to Large Cardinals}
\author{Tomoki Mihara}
\date{}

\begin{document}

\maketitle
%\address
\input{Abstract}
\tableofcontents
%\fn{11U07, 03E55, 20K25}{non-Archimedean analysis, almost free, compact cardinal}

\input{Introduction}
\input{Convention}
\input{Preliminaries}
\input{Free_Filtration}
\input{aleph_1_Strongly_Compact_Case}
\input{Weakly_Compact_Case}

\input{References}

\end{document}

%% file: Abstract.tex
\begin{abstract}
Let $k$ be a complete valuation field. We formulate a free Banach $k$-vector space as a Banach $k$-vector space with an orthonormal Schauder basis, and an almost free Banach $k$-vector space as a non-Archimedean analogue of an almost free Abelian group. As non-Archimedean analogues of the classical facts that an almost free Abelian group is free under the assumption of the $\aleph_1$-strong compactness or the weak compactness of the cardinality, we show that an almost free Banach $k$-vector space is free under similar assumptions.
\end{abstract}

%% file: Introduction.tex
\section{Introduction}
\label{Introduction}

An Abelian group is said to be {\it $\kappa$-free} for a cardinal $\kappa$ if every its subgroup of rank $< \kappa$ is free. Every free Abelian group is $\kappa$-free for any cardinal $\kappa$ (cf.\ \cite{Fuc15} Theorem 3/1.6). For any cardinals $\kappa$ and $\lambda$ with $\lambda < \kappa$, every $\kappa$-free Abelian group is $\lambda$-free by definition. Therefore, the notion of $\kappa$-freeness defines a linear hierarchy of Abelian groups toward free Abelian groups.

\vs
Although there have been various studies on the hierarchy (cf.\ \cite{Fuc15} \S 3.8), we concentrate on one simple topic among them: ``Let $\kappa$ be an uncountable cardinal. When the $\kappa$-freeness implies the freeness?'' The question is much deeper than it looks.

\vs
P.\ Hills verified in \cite{Hil70} Theorem 3 that if $\cf(\kappa) = \omega$, then every $\kappa$-free Abelian group of cardinality $\kappa$ is free. Shelah's singular compactness theorem, which is named after the study of Whitehead problem by S.\ Shelah in \cite{She74}, generalises the result for an axiomatically generalised notion of freeness (cf.\ \cite{Ekl06} \S 0.1 and \S 1.4). As a specialisation, it implies that if $\kappa$ is singular, then every $\kappa$-free Abelian group of cardinality $\kappa$ is free (cf.\ \cite{Fuc15} Theorem 3/9.2).

\vs
The question for the case where $\kappa$ is regular is not completed, and is closely related to studies of large cardinals. For example, J.\ Gregory verified in \cite{Gre73} Corollary of Theorem 1 that under the axiom $\rV = \rL$ of constructibility, there exists a non-free $\kappa$-free Abelian group of cardinality $\kappa$ if (and only if) $\kappa$ is not weakly compact.

\vs
The same question arises when we consider notions alternative to freeness, e.g.\ $\Sigma$-cyclicness. As a preceding study, F.\ Calderoni and A.\ Ostrem focused on the following two classical results on the freeness, and verified analogues on the $\Sigma$-cyclicness (cf.\ \cite{CO25} Theorem 1.1 and Theorem 1.2):
\bi
\item[(1)] If $\kappa$ is $\aleph_1$-strongly compact, then every $\kappa$-free Abelian group of cardinality $\kappa$ is free.
\item[(2)] If $\kappa$ is weakly compact, then every $\kappa$-free Abelian group of cardinality $\kappa$ is free.
\ei
The literature \cite{CO25} compactly deals with the topics so that the reader not familiar with large cardinal axioms can learn the relation of almost free Abelian group theory and compactness in large cardinal axioms. Therefore, it is valuable for an author of an analogous study to follow the structure of \cite{CO25}. 

\vs
The aim of this paper is to imitate the structure of \cite{CO25} in the study of non-Archimedean analysis, i.e.\ analysis of Banach vector spaces over a complete valuation field $k$. Precisely speaking, we formulate a non-Archimedean analogue of freeness as the existence of an orthonormal Schauder basis, and verify corresponding results in a way completely parallel to the proofs of \cite{CO25} Theorem 1.1 and Theorem 1.2, except for conventional differences. See Proposition \ref{trivial} and Proposition \ref{local field} for characterisation of the freeness for the case where $k$ is a trivial valuation field or a local field, and Example \ref{Reid} for examples of non-free Banach $k$-vector spaces for the case where the valuation of $k$ has dense image under the assumption of the non-existence of a measurable cardinal.

\vs
We briefly explain contents of this paper. In \S \ref{Convention}, we introduce convention for this paper. In \S \ref{Preliminaries}, we recall basic properties of Banach $k$-vector spaces. In \S \ref{Free Filtration}, we introduce the notion of freeness and almost freeness in the non-Archimedean setting, and characterise almost freeness in terms of a filtration by free closed $k$-linear subspaces. In \S \ref{aleph_1-strongly Compact Case}, we verify a non-Archimedean analogue of (1). In \S \ref{Weakly Compact Case}, we verify a non-Archimedean analogue of (2).

%% file: Convention.tex
\section{Convention}
\label{Convention}

We denote by $\Set$ the class of sets, by $\Ord \subset \Set$ the subclass of ordinals, by $\Suc \subset \Ord$ the subclass of successor ordinals, by $\Lim \subset \Ord$ the subclass of non-zero limit ordinals, by $\Card \subset \Ord$ the subclass of cardinals, and by $\Reg \subset \Card$ the subclass of uncountable regular cardinals.

\vs
We denote by $\omega$ the least transfinite ordinal, which is identical to the set $\N$ of non-negative integers and the least infinite cardinal $\aleph_0$. For an $\alpha \in \Suc$, we denote by $\alpha^{-}$ the predecessor of $\alpha$.

\vs
For a class $X$ and a $Y \in \Set$, we denote by $X^Y$ the class of maps $Y \to X$. When we handle a sequence or a family $s$ indexed by a set $I$, we frequently use the map notation $s(i)$ instead of the subscript notation $s_i$ to point the entry at $i \in I$, in order to avoid massive use of subscripts.

\vs
For a map $f$ and a subset $X'$ of its domain, we denote by $f \upharpoonright X'$ the restriction of $f$ to $X'$, and by $f[X']$ the image of $X'$ by $f$. For a map $f$ and a $Y' \in \Set$, we denote by $f^{-1}[Y']$ the inverse image of $Y'$ by $f$.

\vs
For an $X \in \Set$ and a map $f \colon X \to \R_{\geq 0}$, we denote by $\sup_{x \in X} f(x)$ the supremum of the image of $f$ in $\R_{\geq 0} \sqcup \ens{\infty}$. In particular, $\sup_{x \in X} f(x)$ for the case $X = \emptyset$ is $0$ rather than $- \infty$ in this context.

\vs
For an $X \in \Set$, an $x \in X$, and a binary relation $R$ on $X$, we set $X_{R x} \coloneqq \set{x' \in X}{x' R x}$. We note that every $d \in \omega$ is identical to $\omega_{< d}$, and hence for a set $X$, $X^d$ formally means $X^{\omega_{< d}}$, which is naturally identified with the set of $d$-tuples in $X$.

\vs
For an $X \in \Set$, we denote by $\# X$ its cardinality, and by $\cP(X)$ the set of subsets of $X$ partially ordered by inclusions. For an $X \in \Set$, a $\kappa \in \Card$, and a binary relation on $\Card$, we set $\cP_{R \kappa}(X) \coloneqq \set{X' \in \cP(X)}{\# X' R \kappa}$. A $\kappa \in \Card$ is said to be {\it weakly compact} if $\kappa \geq \aleph_1$ holds and for any $f \in \ens{0,1}^{\cP_{= 2}(\kappa)}$, there exists an $S \in \cP_{= \kappa}(\kappa)$ such that $\#(f[\cP_{= 2}(S)]) = 1$.

\vs
When we refer to a partially ordered class, we always regard it as a category in the canonical way. A partially ordered set $I$ is said to be {\it $\kappa$-directed} for a $\kappa \in \Card$ if for any $I' \in \cP_{< \kappa}(I)$, there exists an $i \in I$ such that for any $i' \in I'$, $i' \leq i$ holds. In particular, $\aleph_0$-directedness is equivalent to the directedness in the usual sense.

\vs
A filter $\cF$ is said to be {\it $\lambda$-complete} for a $\lambda \in \Card$ if $\bigcap_{U \in S} U \in \cF$ holds for any $S \in \cP_{< \lambda}(\cF) \setminus \ens{\emptyset}$. A $\kappa \in \Card$ is said to be {\it measurable} if $\kappa \geq \aleph_1$ holds and $\kappa$ admits a $\kappa$-complete non-principal ultrafilter, and is said to be {\it $\aleph_1$-strongly compact} if $\kappa \geq \aleph_1$ holds and for any $\kappa$-complete filter $\cF$ on $\kappa$, there exists an $\aleph_1$-complete ultrafilter on $\kappa$ containing $\cF$.

\vs
A {\it complete valuation field} is a field $k$ equipped with a map $\v{\cdot} \colon k \to \R_{\geq 0}$ called a {\it (multiplicative) valuation} satisfying the following:
\bi
\item[(1)] For any $c \in k$, $\v{c} = 0$ holds if and only if $c = 0$ holds.
\item[(2)] For any $(c_0,c_1) \in k^2$, $\v{c_0 - c_1} \leq \max \ens{\v{c_0},\v{c_1}}$ holds.
\item[(3)] For any $(c_0,c_1) \in k^2$, $\v{c_0 c_1} = \v{c_0} \ \v{c_1}$ holds.
\item[(4)] The ultrametric on $k$ defined by
\be
(c_0,c_1) \mapsto \v{c_0 - c_1}
\ee
is complete.
\ei
The reader should be careful not to confound the notations of the valuation $\v{\cdot}$ and the cardinality $\#$.

\vs
Throughout this paper, $k$ denotes a complete valuation field. We denote by $O_k$ the valuation ring $\set{c \in k}{\v{c} \leq 1}$ of $k$. We say that the valuation of $k$ is {\it trivial} if $\v{k^{\times}} = \ens{1}$, and is {\it discrete} if $\v{k^{\times}}$ is a free subgroup of rank $1$ of the multiplication group $\R_{> 0}$. We say that $k$ is a {\it local field} if the valuation of $k$ is discrete and the residue field of the local ring $O_k$ is a finite field.

\vs
A {\it normed $k$-vector space} is a $k$-vector space $V$ equipped with a map $\n{\cdot} \colon V \to \R_{\geq 0}$ called a {\it norm} satisfying the following:
\bi
\item[(1)] For any $v \in V$, $\n{v} = 0$ holds if and only if $v = 0$ holds.
\item[(2)] For any $(v_0,v_1) \in V^2$, $\n{v_0 - v_1} \leq \max \ens{\n{v_0},\n{v_1}}$ holds.
\item[(3)] For any $(c,v) \in k \times V$, $\n{cv} = \v{c} \ \n{v}$ holds.
\ei
A {\it Banach $k$-vector space} is a normed $k$-vector space satisfying the following:
\bi
\item[(4)] The ultrametric on $V$ defined by
\be
(v_0,v_1) \mapsto \n{v_0 - v_1}
\ee
is complete.
\ei
We denote by $\Norm(k)$ the class of normed $k$-vector spaces, and by $\Ban(k) \subset \Norm(k)$ the subclass of Banach $k$-vector spaces. 

\vs
Let $(V,W) \in \Norm(k)^2$. A $k$-linear homomorphism $f \colon V \to W$ is said to be {\it bounded} if there exists a $C \in \R_{\geq 0}$ such that for any $v \in V$, $\n{f(v)} \leq C \n{v}$ holds, and is said to be {\it contracting} if such a $C$ can be chosen to be $1$. We denote by $\n{f}$ the infimum of such a $C$, and call it {\it the operator norm of $f$}. We denote by $\Hom(V,W)$ the Banach $k$-vector space of bounded $k$-linear homomorphisms $V \to W$ equipped with the operator norm, and by $\Hom_{\leq 1}(V,W)$ its subset of contracting $k$-linear homomorphisms.

\vs
When we refer to $\Norm(k)$ or its subclass, we always equip it with the category structure given by $\Hom_{\leq 1}$. In particular, a morphism of normed $k$-vector spaces refers to a contracting $k$-linear homomorphism, and an isomorphism of normed $k$-vector spaces refers to an isomorphism in $\Ban(k)$ in this sense, i.e.\ an isometric $k$-linear isomorphism.

\vs
Let $V_0$ be a closed $k$-linear subspace of a $V \in \Ban(k)$. Then $V_0$ forms a Banach $k$-vector space with respect to the restriction of the structure of $V$, and the quotient $k$-vector space $V/V_0$ also forms a Banach $k$-vector space with respect to the quotient norm
\be
\ol{v} & \mapsto & \inf_{v \in \ol{v}} \n{v}.
\ee
We always regard $V_0$ and $V/V_0$ as Banach $k$-vector spaces in these ways.

\vs
Let $I \in \Set$ and $\cV \in \Ban(k)^I$. We denote by $\prod_{i \in I} \cV(i)$ the bounded direct product of $\cV$, i.e.\ the Banach $k$-vector space whose underlying set is the set of maps $v \colon I \to \bigsqcup_{i \in I} \cV(i)$ such that $v(i) \in \cV(i)$ for any $i \in I'$ and $\sup_{i \in I} \n{v(i)} < \infty$ and whose norm is the supremum norm, i.e.\ the map $\n{\cdot} \colon \prod_{i \in I} \cV(i) \to \R_{\geq 0}$ defined by
\be
\n{v} \coloneqq \sup_{i \in I} \n{v(i)},
\ee
and by $\bigoplus_{i \in I} \cV(i)$ the completed direct sum of $\cV$, i.e.\ the closed $k$-linear subspace of $\prod_{i \in I} \cV(i)$ given as
\be
& & \set{v \in \prod_{i \in I} \cV(i)}{\forall \epsilon \in \R_{> 0}[\exists I_0 \in \cP_{< \aleph_0}(I)[\forall i \in I \setminus I_0[\n{v(i)} < \epsilon]]]} \\
& = & \set{v \in \prod_{i \in I} \cV(i)}{\forall \epsilon \in \R_{> 0} \left[ \# \set{i \in I}{\n{v(i)} \geq \epsilon} < \aleph_0 \right]}.
\ee
We do not use $\prod$ and $\bigoplus$ for algebraic direct products or algebraic direct sums.

\vs
For an $I \in \Set$ and a $V \in \Ban(k)$, we denote by $\ell^{\infty}(I,V)$ (resp.\ $\rC_0(I,V)$) the bounded direct product $\prod_{i \in I} V$ (resp.\ the completed direct sum $\bigoplus_{i \in I} V$) of the constant family $(V)_{i \in I}$.

\vs
For an $I \in \Set$, a $V \in \Ban(k)$, and a $v \in \rC_0(I,V)$, we denote by $\sum_{i \in I} v(i) \in V$ the usual summary of $v$ if $\# V < \aleph_0$, and otherwise the unique $s \in V$ satisfying that for any $\epsilon \in \R_{> 0}$, there exists an $I_0 \in \cP_{< \aleph_0}(I)$ such that $\n{v(i)} < \epsilon$ holds for any $i \in I \setminus I_0$ and $\n{s - \sum_{i \in I_0} v(i)} < \epsilon$ holds.

%% file: Preliminaries.tex
\section{Preliminaries}
\label{Preliminaries}

We recall basic properties of $\Ban(k)$. We denote by $F_{\Comp}$ the forgetful functor $\Ban(k) \hookrightarrow \Norm(k)$.

\begin{prp}
\label{completeness of aleph_1-directed limit}
Let $I$ be an $\aleph_1$-directed set, and $\cV$ a functor $I \to \Ban(k)$. Set $V \coloneqq \varinjlim (F_{\Comp} \circ \cV)$. Then the following hold:
\bi
\item[(1)] For any $i \in I$ and $v \in \cV(i)$, the norm of the image of $v$ in $V$ is the minimum of the norms of the images of $v$ in $\cV(i')$ with $i' \in I_{\geq i}$.
\item[(2)] The equality $F_{\Comp}(\varinjlim \cV) = V$ holds.
\ei
\end{prp}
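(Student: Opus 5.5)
Throughout I take the colimit in $\Norm(k)$ concretely. Its underlying $k$-vector space is the algebraic colimit $\bigsqcup_{i \in I} \cV(i)/\sim$ of the underlying vector spaces. The seminorm of the class $[v]$ of $v \in \cV(i)$ is $\inf_{i' \in I_{\geq i}} \n{\cV(i \leq i')(v)}$. This quantity is non-increasing along $I$ because all transition maps are contracting. I will quotient by the null vectors, so that the result is the universal normed space receiving contracting maps from all $\cV(i)$; the universal property reduces to the fact that any compatible family of contractions factors through this seminormed colimit, and hence through its Hausdorff quotient. I also use that $\aleph_1$-directedness gives upper bounds for countable subsets of $I$.

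For (1), fix $i$ and $v \in \cV(i)$, and pick a sequence $(i_n)_{n \in \omega}$ in $I_{\geq i}$ with $\n{\cV(i \leq i_n)(v)}$ converging to the infimum. By $\aleph_1$-directedness there is an $i' \in I$ with $i_n \leq i'$ for all $n$. Contractivity gives $\n{\cV(i \leq i')(v)} \leq \n{\cV(i \leq i_n)(v)}$ for every $n$, so the infimum is attained at $i'$ and is a minimum. As a by-product, if $[v]$ has seminorm $0$ then $v$ maps to $0$ in some $\cV(i')$, so $[v] = 0$ algebraically. Hence no quotient by null vectors is needed, and $V$ is exactly the algebraic colimit with this norm.

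For (2), I show that $V$ is complete. Then $V$ lies in $\Ban(k)$, it represents the colimit there (since $\Ban(k)$ is a full subcategory), and $F_{\Comp}(\varinjlim \cV) = V$ follows. Let $([v_n])_{n \in \omega}$ be a Cauchy sequence in $V$ with $v_n \in \cV(i_n)$. First choose a common upper bound $j_0$ of all $i_n$, so that every $v_n$ may be regarded in $\cV(j_0)$. Next, for each pair $(m,n)$, use (1) to pick $j_{m,n} \geq j_0$ at which the norm of $v_m - v_n$ equals its norm in $V$. There are only countably many pairs, so $\aleph_1$-directedness gives a single $j$ above all $j_{m,n}$. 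At $j$, contractivity together with minimality forces $\n{\cV(j_0 \leq j)(v_m - v_n)} = \n{[v_m] - [v_n]}$. Thus the images of the $v_n$ form a Cauchy sequence in the Banach space $\cV(j)$, which has a limit $w$. The class $[w]$ is then the limit in $V$, because the canonical map $\cV(j) \to V$ is contracting.

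The only delicate point is this simultaneous choice of a single index realising all the pairwise norms. It is exactly where $\aleph_1$-directedness, rather than mere directedness, is essential.
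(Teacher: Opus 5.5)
Your proof is correct and follows the paper's argument essentially verbatim. For (1) you take a countable sequence of indices whose norms approach the infimum and use a common upper bound from $\aleph_1$-directedness. For (2) you lift the Cauchy sequence to a single $\cV(j)$ at which all pairwise distances are already realised, then use completeness of $\cV(j)$ and contractivity of $\cV(j) \to V$. Your explicit description of the colimit in $\Norm(k)$ as the algebraic colimit with the infimum seminorm, and your remark that (1) makes the quotient by null vectors unnecessary, spell out what the paper leaves implicit in the phrase ``by the definition of the colimit in $\Norm(k)$''.
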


\begin{proof}
For each $i \in I$, we denote by $\phi_i$ the canonical morphism $\cV(i) \to V$. For each $(i,i') \in I^2$ with $i \leq i'$, we denote by $\phi_{i,i'}$ the given morphism $\cV(i) \to \cV(i')$. 

\vs
(1) Let $i \in I$ and $v \in \cV(i)$. For any $i' \in I_{\geq i}$, we have $\n{\phi_i(v)} = \n{\phi_{i'}(\phi_{i,i'}(v))} \leq \n{\phi_{i,i'}(v)}$. For any $\epsilon \in \R_{> 0}$, there exists an $i' \in I_{\geq i}$ such that $\n{\phi_{i,i'}(v)} < \n{\phi_i(v)} + \epsilon$ by the definition of the colimit in $\Norm(k)$. By the first countability of $\R_{\geq 0}$ and the $\aleph_1$-directedness of $I$, there exists an $i' \in I_{\geq i}$ such that $\n{\phi_{i,i'}(v)} \leq \n{\phi(v)}$. This implies the assertion.

\vs
(2) It suffices to show the completeness of $V$. Let $v \in V^{\omega}$ be a Cauchy sequence. We show that $v$ converges in $V$. By (1) and the $\aleph_1$-directedness of $I$, there exists a pair $(i,v')$ of an $i \in I$ and a $v' \in \cV(i)^{\omega}$ such that $\phi_i \circ v' = v$ and $\n{v'(h_0) - v'(h_1)} = \n{v(h_0) - v(h_1)}$ for any $(h_0,h_1) \in \omega^2$. In particular, $v'$ is a Cauchy sequence in $\cV(i)$, and hence converges to a $v'_{\infty} \in \cV(i)$ by the completeness of $\cV(i)$. Set $v_{\infty} \coloneqq \phi_i(v'_{\infty}) \in V$. For any $h \in \omega$, we have
\be
\n{v(h) - v_{\infty}} = \n{\phi_i(v'(h)) - \phi_i(v'_{\infty})} = \n{\phi_i(v'(h) - v'_{\infty})} \leq \n{v'(h) - v'_{\infty}}.
\ee
This implies that $v$ converges to $v_{\infty}$.
\end{proof}

For a $V \in \Ban(k)$ and an $S \in \cP(V)$, we denote by $\langle S \rangle \subset V$ the closure of the $k$-linear subspace generated by $S$.

\begin{dfn}
For a $V \in \Ban(k)$, we denote by $\rank(V)$ the minimum of the cardinality of an $S \in \cP(V)$ such that $V = \langle S \rangle$, and say that $V$ is {\it of countable type} if $\rank(V) \leq \aleph_0$.
\end{dfn}

For a $V \in \Ban(k)$, an $E \in \cP(V)$ is said to be an {\it orthonormal Schauder basis of $V$} if it satisfies the following:
\bi
\item[(1)] For any $e \in E$, $\n{e} = 1$ holds.
\item[(2)] For any $c \in \rC_0(E,k)$, $\n{\sum_{e \in E} c(e) e} = \n{c}$ holds, where the left hand side makes sense because (1) implies $(c(e) e)_{e \in E} \in \rC_0(E,V)$.
\item[(3)] For any $v \in V$, there exists a $c \in \rC_0(E,k)$ such that $\sum_{e \in E} c(e) e = v$, where $c$ is unique by (2).
\ei
For example, for any $I \in \Set$, the canonical basis of $\rC_0(I,k)$, i.e.\ the subset of characteristic functions of singletons, is an orthonormal Schauder basis. Unlike a basis of a $k$-vector space, an orthonormal Schauder basis of a Banach $k$-vector space does not necessarily exist. Nevertheless, the uniqueness of the cardinality of an orthonormal Schauder basis holds by the following:

\begin{prp}
\label{uniqueness of dimension}
Let $V \in \Ban(k)$. Every orthonormal Schauder basis $E$ of $V$ is of cardinality $\rank(V)$.
\end{prp}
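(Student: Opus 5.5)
We prove the two inequalities $\rank(V) \leq \# E$ and $\# E \leq \# S$ for every $S \in \cP(V)$ with $V = \langle S \rangle$; the latter, applied to an $S$ of cardinality $\rank(V)$, gives the assertion.

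\vs
The inequality $\rank(V) \leq \# E$ is immediate. By condition (3), every $v \in V$ is of the form $\sum_{e \in E} c(e) e$ with $c \in \rC_0(E,k)$. By the definition of the summation, such a $v$ is a limit of finite $k$-linear combinations of elements of $E$. Hence $V = \langle E \rangle$.

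\vs
For the reverse inequality, we first introduce coordinate functionals. For $e \in E$, let $e^* \colon V \to k$ send $v = \sum_{e' \in E} c(e') e'$ to $c(e)$; this is well defined by the uniqueness in (3). Condition (2) gives $\v{e^*(v)} \leq \n{c} = \n{v}$, so each $e^*$ is a contracting, hence continuous, $k$-linear map, and $e^*(e') = \delta_{e,e'}$. Now fix $S \in \cP(V)$ with $\langle S \rangle = V$. We split into two cases.

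\vs
\emph{Case 1: $S$ is infinite.} For $s \in S$, let $\r{supp}(s) \coloneqq \set{e \in E}{e^*(s) \neq 0}$. This set is countable, since it is the union over $n$ of the finite sets $\set{e \in E}{\v{e^*(s)} \geq 1/(n+1)}$. Put $E' \coloneqq \bigcup_{s \in S} \r{supp}(s)$, so that $\# E' \leq \# S \cdot \aleph_0 = \# S$. Consider $W \coloneqq \bigcap_{e \in E \setminus E'} \ker(e^*)$. It is a closed $k$-linear subspace, because each $e^*$ is continuous, and it contains $S$. Hence $W \supset \langle S \rangle = V$. For $e \in E \setminus E'$ we would have $e \in W$, contradicting $e^*(e) = 1$. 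Thus $E = E'$ and $\# E \leq \# S$.

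\vs
\emph{Case 2: $S$ is finite, say $\# S = n$.} Suppose $\# E > n$ and pick distinct $e_0, \ldots, e_n \in E$. Let $\pi \colon V \to k^{n+1}$ be given by $v \mapsto (e_i^*(v))_{i \leq n}$, where $k^{n+1}$ carries the supremum norm. The map $\pi$ is continuous, and it is surjective because $\pi(e_i)$ is the $i$-th standard vector. The image $\pi(\r{span}(S))$ is a $k$-linear subspace of dimension $\leq n$. It is closed in $k^{n+1}$, since it is the common zero set of finitely many linear functionals on $k^{n+1}$, and each such functional is continuous for the supremum norm. By continuity of $\pi$,
\be
k^{n+1} = \pi(V) = \pi(\ol{\r{span}(S)}) \subset \ol{\pi(\r{span}(S))} = \pi(\r{span}(S)).
\ee
This contradicts the dimension count, so $\# E \leq n$.

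\vs
The main obstacle is the finite case. There one cannot use the countable-support argument, since $\# S \cdot \aleph_0$ need not equal $\# S$. The step above avoids the general fact that finite-dimensional subspaces are closed, and uses only closedness of subspaces of $k^{n+1}$. That is elementary and valid even when the valuation of $k$ is trivial.
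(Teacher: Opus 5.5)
Your proof is correct. Its infinite case is the same countable-support argument the paper uses, but the case split and the tools differ.

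\emph{The paper's route.} The paper splits according to whether $\dim_k V$ is finite and relies on \cite{BGR84} Proposition 2.3.3/4, the fact that finite-dimensional subspaces are closed. This gives that $E$ is an algebraic basis and $\rank(V)=\dim_k V$ when $\dim_k V<\aleph_0$. It also gives, implicitly, $\#E\geq\aleph_0$ and $\rank(V)\geq\aleph_0$ in the infinite-dimensional case, which the step $\#S\times\aleph_0=\#S$ needs.

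\emph{Your route.} You split according to whether the generating set $S$ is finite. The coordinate functionals $e^*$ then let you replace the cited closedness theorem by the elementary fact that linear subspaces of $k^{n+1}$ with the supremum norm are closed.

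\emph{The infinite case.} Your closed subspace $W=\bigcap_{e\in E\setminus E'}\ker(e^*)$ proves explicitly that $E=\bigcup_{s\in S}\mathrm{supp}(s)$. The paper's displayed chain of inequalities only bounds $\#\bigcup_{v\in S}E_v$. It leaves unstated the inclusion $E\subset\bigcup_{v\in S}E_v$, which is what actually yields $\#E\leq\rank(V)$. That inclusion follows from orthonormality: any $e\in E$ outside the union has distance $1$ from the span of $S$.

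So your version is more self-contained and fills a step the paper leaves implicit. The paper's is shorter, at the cost of a citation and that unstated inclusion.
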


\begin{proof}
Set $\kappa \coloneqq \rank(V)$. By \cite{BGR84} Proposition 2.3.3/4, every finite dimensional $k$-linear subspace of $V$ is closed. If $\dim_k V < \aleph_0$, then $E$ is a $k$-linear basis of $V$ and $\kappa = \dim_k V$ holds because every $k$-linear subspace of $V$ is closed. Suppose $\dim_k V \geq \aleph_0$. Then we have $\# E \geq \aleph_0$ because otherwise $E$ generates a finite dimensional closed $k$-linear subspace. By the definition of $\kappa$, we have $\kappa \leq \# E$. Therefore, it suffices to show $\# E \leq \kappa$.

\vs
Take an $S \in \cP_{= \kappa}(V)$ such that $V = \langle S \rangle$. Let $v \in S$. Since $E$ is an orthonormal Schauder basis of $V$, there exists an $f_v \in \rC_0(E,k)$ such that $v = \sum_{e \in E} f_v(e) e$. Set $E_v \coloneqq \set{e \in E}{f_v(e) \neq 0}$. By the definition of $\rC_0(E,k)$, we have $\# E_e \leq \aleph_0$. Since $S$ generates a dense $k$-linear subspace of $V$, so does $\bigcup_{v \in V} E_v$. We obtain
\be
\kappa \leq \# \left( \bigcup_{v \in S} E_v \right) \leq \sum_{v \in S} \# E_v \leq \sum_{v \in S} \aleph_0 = \# S \times \aleph_0 = \# S = \kappa
\ee
with respect to cardinal arithmetic by $\# S = \kappa \geq \aleph_0$.
\end{proof}

For a $V \in \Ban(k)$ and a $\kappa \in \Card$, we set
\be
\cL_{< \kappa}(V) \coloneqq \set{\langle S \rangle}{S \in \cP_{< \kappa}(V)},
\ee
and regard it as a partially ordered set by inclusions.

\begin{prp}
\label{L is directed}
For any $V \in \Ban(k)$ and $\kappa \in \Card \setminus \omega$, $\cL_{< \kappa}(V)$ is $\cf(\kappa)$-directed.
\end{prp}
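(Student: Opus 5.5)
Given $\mathcal{S} \in \cP_{< \cf(\kappa)}(\cL_{< \kappa}(V))$, we need some $W \in \cL_{< \kappa}(V)$ that contains every member of $\mathcal{S}$. The plan is to take the closed span of the union of generating sets. For each $W' \in \mathcal{S}$, choose (by the axiom of choice) an $S_{W'} \in \cP_{< \kappa}(V)$ with $W' = \langle S_{W'} \rangle$. Set $S \coloneqq \bigcup_{W' \in \mathcal{S}} S_{W'}$ and $W \coloneqq \langle S \rangle$.

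First, $W$ contains each $W'$. Indeed $S_{W'} \subset S$, so $\langle S_{W'} \rangle \subset \langle S \rangle$, because $\langle \cdot \rangle$ (closure of the generated $k$-linear subspace) is monotone with respect to inclusion.

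Second, $W \in \cL_{< \kappa}(V)$, which amounts to showing $\# S < \kappa$. Here $S$ is a union of fewer than $\cf(\kappa)$ sets, each of cardinality $< \kappa$. Put $\lambda \coloneqq \sup_{W' \in \mathcal{S}} \# S_{W'}$. Since the index set has size $< \cf(\kappa)$ and every term is $< \kappa$, the definition of cofinality gives $\lambda < \kappa$. Cardinal arithmetic then gives $\# S \leq \# \mathcal{S} \cdot \lambda \leq \max\ens{\# \mathcal{S}, \lambda, \aleph_0}$.

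This last bound is the only delicate step, because of the $\aleph_0$ term. If $\kappa > \aleph_0$, then $\max\ens{\# \mathcal{S}, \lambda, \aleph_0} < \kappa$, since $\# \mathcal{S} < \cf(\kappa) \leq \kappa$. If $\kappa = \aleph_0$, then $\cf(\kappa) = \aleph_0$, so $\mathcal{S}$ is finite and each $S_{W'}$ is finite; hence $S$ is finite directly. In both cases $\# S < \kappa$, so $W$ is the required upper bound.
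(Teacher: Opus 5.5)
Your proof is correct and follows the paper's argument: choose generating sets $S_{W'}$ of size $<\kappa$, take $W=\langle \bigcup_{W'} S_{W'} \rangle$, and use monotonicity of $\langle\cdot\rangle$. The only difference is that you spell out the bound $\# S<\kappa$, including the case $\kappa=\aleph_0$, which the paper asserts in one line.
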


\begin{proof}
Let $\cV \subset \cP_{< \cf(\kappa)}(\cL_{< \kappa}(V))$. For each $V' \in \cV$, take an $S_{V'} \in \cP_{< \kappa}(V)$ with $V' = \langle S_{V'} \rangle$. Set $S \coloneqq \bigcup_{V' \in \cV} S_{V'}$ and $V_0 \coloneqq \langle S \rangle$. By $\# \cV < \cf(\kappa)$ and $\# S_{V'} < \kappa$ for any $V' \in \cV$, we have $\# S < \kappa$. Therefore, we have $V_0 \in \cL_{< \kappa}(V)$. For any $V' \in \cV$, we have $S_{V'} \subset S$ and hence $V' \subset V_0$.
\end{proof}

Although we do not use the following, it is helpful for the reader to understand the structure of $\Ban(k)$:

\begin{crl}
\label{accessibility}
Let $\lambda$ denote the least cardinal greater than both of $\aleph_0$ and the minimum of the cardinalities of dense subsets of $k$. Then for any $\kappa \in \Card \setminus \lambda$, every Banach $k$-vector space of rank $< \kappa$ is a $\kappa$-compact object of $\Ban(k)$, and $\Ban(k)$ is a $\kappa$-accessible category.
\end{crl}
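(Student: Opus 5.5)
We must prove Corollary \ref{accessibility}. Fix $\kappa \in \Card \setminus \lambda$; then $\kappa$ is uncountable and exceeds the density character of $k$.

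\textbf{Step 1: $\kappa$-compactness of objects of rank $< \kappa$.} Let $W \in \Ban(k)$ with $\rank(W) < \kappa$, and take $S \in \cP_{< \kappa}(W)$ with $W = \langle S \rangle$. Let $\cV \colon I \to \Ban(k)$ be a $\kappa$-directed diagram. Since $\kappa \geq \aleph_1$, $I$ is $\aleph_1$-directed, so by Proposition \ref{completeness of aleph_1-directed limit} (2) the colimit $V$ in $\Ban(k)$ is the colimit in $\Norm(k)$. Its underlying vector space is the algebraic colimit modulo norm-zero vectors, and its norm is described by Proposition \ref{completeness of aleph_1-directed limit} (1).

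Let $f \colon W \to V$ be contracting. We must show $f$ factors through some $\cV(i)$, essentially uniquely.
\begin{itemize}
\item[(a)] Let $D \subset k$ be dense with $\# D < \lambda \leq \kappa$, and set $W_0$ to be the $D$-linear span of $S$ (closed under $D$-combinations and sums). Then $\# W_0 < \kappa$, using $\kappa$ infinite with $\#S, \#D < \kappa$.
\item[(b)] Choose lifts of $f(w)$ for $w \in W_0$ in various $\cV(i)$. By $\kappa$-directedness, the $< \kappa$ many linear relations among them (sums, $D$-multiples) can all be realised at a single stage $i$, giving a $D$-linear map $g_0 \colon W_0 \to \cV(i)$.
\item[(c)] Using Proposition \ref{completeness of aleph_1-directed limit} (1) and $\kappa$-directedness once more, pass to a larger $i$ such that $\n{g_0(w)} = \n{f(w)} \leq \n{w}$ for all $w \in W_0$.
\end{itemize}
Then $g_0$ is contracting. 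It extends to the $k$-span because $D$ is dense in $k$ and the norm is continuous, and then to the closure $W$ by completeness of $\cV(i)$. This yields a contracting $g \colon W \to \cV(i)$ whose composite with $\cV(i) \to V$ agrees with $f$ on the dense set $W_0$, hence on $W$.

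Essential uniqueness follows similarly. If two maps $g, g' \colon W \to \cV(i)$ agree in $V$, then by (1) the norm $\n{g(w)-g'(w)}$ becomes $0$ at some later stage. Doing this for the $< \kappa$ elements of $W_0$ simultaneously and passing to the closure, the two maps become equal at a single later stage.

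\textbf{Step 2: accessibility.} We need a set of $\kappa$-compact objects such that every object is a $\kappa$-directed colimit of them.
\begin{itemize}
\item Up to isometry, Banach spaces of rank $< \kappa$ form a set: such a space is the completion of a normed space on a set of size $< \kappa$ obtained as in (a), so the isometry classes are bounded in number.
\item Every $V \in \Ban(k)$ is the colimit of $\cL_{< \kappa}(V)$ with the inclusions. By Proposition \ref{L is directed} this poset is $\cf(\kappa)$-directed.
\item When $\kappa$ is regular, this gives $\kappa$-directedness. The colimit is $V$ by Proposition \ref{completeness of aleph_1-directed limit}: the union of the subspaces is all of $V$ with the induced norm, since every $v$ lies in $\langle \ens{v} \rangle$.
\end{itemize}

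\textbf{Main obstacle.} The main difficulty is the case of singular $\kappa$, where $\cL_{< \kappa}(V)$ is only $\cf(\kappa)$-directed. If the intended statement requires regular $\kappa$ (as accessibility usually does), I would first check whether the corollary implicitly assumes $\kappa \in \Reg$. Otherwise I would try to rewrite $V$ as a $\kappa$-directed colimit of the spaces in $\cL_{< \kappa}(V)$ indexed by $\cP_{< \kappa}(V)$ ordered by inclusion. That poset is, however, still only $\cf(\kappa)$-directed for singular $\kappa$. So I expect the statement to hold as written only for regular $\kappa$, and the first task is to settle this point.
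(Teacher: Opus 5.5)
Your proof is correct and follows the paper's route, with the details filled in: $\kappa$-compactness comes from a dense subset of cardinality $<\kappa$ together with Proposition \ref{completeness of aleph_1-directed limit}~(1), and accessibility comes from the diagram $\cL_{<\kappa}(V)$ together with Propositions \ref{completeness of aleph_1-directed limit}~(2) and \ref{L is directed}. Your regularity caveat is justified, because the paper's appeal to Proposition \ref{L is directed} also yields only $\cf(\kappa)$-directedness, and accessibility is normally defined only for regular $\kappa$. If singular $\kappa$ must be covered, note that a $\kappa$-directed poset is then automatically $\kappa^{+}$-directed, so the claim reduces to the regular cardinal $\kappa^{+} \geq \lambda$, with the objects of rank $\leq \kappa$ as generators.
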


\begin{proof}
Every $V \in \Ban(k)$ with $\rank(V) < \kappa$ admits a dense set of cardinality $< \kappa$ by $\lambda \leq \kappa$, and hence is a $\kappa$-compact object of $\Ban(k)$ with $\# V \leq \kappa^{\aleph_0}$ by Proposition \ref{completeness of aleph_1-directed limit}. The set of $V \in \Ban(k)$ with $\rank(V) < \kappa$ whose underlying sets are subsets of $\kappa^{\aleph_0}$ generates $\Ban(k)$ under $\kappa$-directed small colimits by Proposition \ref{completeness of aleph_1-directed limit} (2) and Proposition \ref{L is directed}.
\end{proof}

\begin{crl}
If $k$ is separable, then every Banach $k$-vector space of countable type is an $\aleph_1$-compact object of $\Ban(k)$, and  $\Ban(k)$ is an $\aleph_1$-accessible category.
\end{crl}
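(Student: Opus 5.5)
This corollary is the special case $\kappa = \aleph_1$ of Corollary \ref{accessibility}, so the plan is to check that the constant $\lambda$ there equals $\aleph_1$ and then specialise.

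First, since $k$ is separable, the minimum of the cardinalities of dense subsets of $k$ is at most $\aleph_0$. The least cardinal greater than both this minimum and $\aleph_0$ is therefore $\aleph_1$, i.e.\ $\lambda = \aleph_1$. Taking $\kappa \coloneqq \aleph_1 \in \Card \setminus \lambda$, Corollary \ref{accessibility} says that every Banach $k$-vector space of rank $< \aleph_1$ is an $\aleph_1$-compact object of $\Ban(k)$, and that $\Ban(k)$ is $\aleph_1$-accessible. Since rank $< \aleph_1$ means rank $\leq \aleph_0$, the first assertion is exactly the statement about spaces of countable type.

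To make the argument less dependent on the terse proof of Corollary \ref{accessibility}, I would also verify its one nontrivial input in this setting: a space $V$ of countable type has a countable dense subset. Take a countable $S$ with $V = \langle S \rangle$ and a countable dense $D \subset k$. Then the finite $D$-linear combinations of elements of $S$ form a countable set, and this set is dense in the $k$-span of $S$ by continuity of the operations, hence dense in $V$. This separability is what allows an $\aleph_1$-directed colimit of the form in Proposition \ref{completeness of aleph_1-directed limit} to be controlled on a countable dense set.

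The only step needing care is exactly this use of $\aleph_1$-directedness together with the norm-attainment in Proposition \ref{completeness of aleph_1-directed limit}~(1). It ensures that a morphism from a separable space into an $\aleph_1$-directed colimit factors through some stage, isometrically on a dense set, and hence extends by continuity. That reasoning is already covered by Corollary \ref{accessibility}, so here it suffices to cite it.
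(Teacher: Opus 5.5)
Your proposal is correct and follows the paper's own proof: the paper also just notes that $\lambda = \aleph_1$ when $k$ is separable and specialises Corollary \ref{accessibility} to $\kappa = \aleph_1$. Your extra check that a space of countable type has a countable dense subset is a sound but optional elaboration of an input that Corollary \ref{accessibility} already uses.
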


\begin{proof}
The assertion follows from Corollary \ref{accessibility} by $\lambda = \aleph_1$.
\end{proof}

The following is an ultrapower construction of a non-Archimedean Banach space, which is well-known for Archimedean Banach spaces, i.e.\ Banach spaces over $\R$ or $\C$.

\begin{prp}
\label{ultrapower}
Let $I \in \Set$ with an ultrafilter $\cU$ on $I$, and $\cV \in \Ban(k)^I$. Set $V \coloneqq \prod_{i \in I} \cV(i)$ and $V_0 \coloneqq \set{v \in V}{\forall \epsilon \in \R_{> 0}[\set{i \in I}{\n{v(i)} < \epsilon} \in \cU]}$. Then the following hold:
\bi
\item[(1)] The set $V_0$ is a closed $k$-linear subspace of $V$.
\item[(2)] For any $v \in V$, the norm $\n{v + V_0}$ of $v + V_0 \in V/V_0$ is a unique $r \in \R_{\geq 0}$ such that for any $\epsilon \in \R_{> 0}$, $\set{i \in I}{\v{\n{v(i)} - r} < \epsilon} \in \cU$ holds.
\ei
\end{prp}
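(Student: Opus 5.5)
For (1), I will first show that $V_0$ is a $k$-linear subspace. Clearly $0 \in V_0$, since $\set{i \in I}{\n{0} < \epsilon} = I \in \cU$.

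For $(v,w) \in V_0^2$ and $\epsilon \in \R_{> 0}$, the ultrametric inequality gives
\be
\set{i \in I}{\n{v(i) - w(i)} < \epsilon} \supset \set{i \in I}{\n{v(i)} < \epsilon} \cap \set{i \in I}{\n{w(i)} < \epsilon},
\ee
and the right hand side lies in $\cU$ because $\cU$ is closed under finite intersections and supersets. For $c \in k^{\times}$ and $v \in V_0$, I apply the definition of $V_0$ with $\epsilon / \v{c}$ in place of $\epsilon$, which handles scalar multiplication.

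Closedness is an $\epsilon/2$-argument. Let $v \in V$ lie in the closure of $V_0$ and fix $\epsilon \in \R_{> 0}$. I choose $w \in V_0$ with $\n{v - w} < \epsilon$. Since the norm on $V$ is the supremum norm, $\n{v(i) - w(i)} < \epsilon$ holds for every $i \in I$. Hence the set $\set{i \in I}{\n{v(i)} < \epsilon}$ contains $\set{i \in I}{\n{w(i)} < \epsilon}$, which is in $\cU$.

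For (2), I will treat existence and uniqueness of $r$, and then identify $r$ with the quotient norm.

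\emph{Existence of $r$.} Set $M \coloneqq \n{v} < \infty$, so $(\n{v(i)})_{i \in I}$ takes values in the compact interval $[0,M]$. I will produce the $\cU$-limit $r$ by a bisection argument. Starting from $[0,M]$, at each step I split the current interval into two closed halves. Their preimages under $i \mapsto \n{v(i)}$ cover the preimage of the current interval, which lies in $\cU$. Because $\cU$ is an ultrafilter, one of the two preimages lies in $\cU$, and I keep that half. The nested intervals shrink to a point $r$. Given $\epsilon$, one of these intervals lies inside $(r - \epsilon, r + \epsilon)$, so the required set lies in $\cU$.

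\emph{Uniqueness of $r$.} Suppose $r \neq r'$ both satisfy the condition, and take $\epsilon = \v{r - r'}/2$. The two corresponding sets are disjoint members of $\cU$, so their intersection is $\emptyset \in \cU$, a contradiction.

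\emph{Identifying $r$ with the quotient norm.} Write $r(v)$ for this limit. I will show $r(v) = \n{v + V_0}$ in two steps.

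\textbf{Step 1: $r(v) \leq \n{v + V_0}$.} For $w \in V_0$ and $\epsilon \in \R_{> 0}$, I claim that on a set in $\cU$ both of the following hold:
\bi
\item[(a)] $\n{v(i) + w(i)} \leq \n{v + w}$, which holds for every $i$ by the definition of the supremum norm;
\item[(b)] $\n{w(i)} < \epsilon$, which holds on a set in $\cU$ because $w \in V_0$.
\ei
Using also $\v{\n{v(i)} - r} < \epsilon$ on a set in $\cU$, we get on a $\cU$-large set
\be
r - \epsilon < \n{v(i)} \leq \max \ens{\n{v(i) + w(i)}, \n{w(i)}} \leq \max \ens{\n{v + w}, \epsilon}.
\ee
Hence $r \leq \max\ens{\n{v+w}, \epsilon} + \epsilon$. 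Letting $\epsilon \to 0$ and taking the infimum over $w$ gives the inequality.

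\textbf{Step 2: $\n{v + V_0} \leq r(v)$.} This is the step needing an actual construction. Fix $\epsilon \in \R_{> 0}$ and set $A \coloneqq \set{i \in I}{\n{v(i)} < r + \epsilon} \in \cU$. I define $w \in V$ by $w(i) \coloneqq -v(i)$ for $i \notin A$ and $w(i) \coloneqq 0$ for $i \in A$. This $w$ is bounded by $M$. It lies in $V_0$ because it vanishes on $A \in \cU$. Moreover, $\n{v + w} = \sup_{i \in A} \n{v(i)} \leq r + \epsilon$. Letting $\epsilon \to 0$ gives the inequality.

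I expect no serious obstacle. The only points needing care are the ultrafilter limit in the existence of $r$, and checking that the truncation $w$ in Step 2 is an element of the bounded product $V$ and of $V_0$.
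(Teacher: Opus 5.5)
Your proof is correct and takes essentially the approach the paper has in mind: the paper's proof is only the remark that it is ``completely parallel to the Archimedean counterpart'', and that counterpart is your argument. You take the ultrafilter limit $r$ of $\n{v(i)}$ by bisection, bound it above by $\n{v + w}$ for $w \in V_0$ using the (here ultrametric) triangle inequality, and get the reverse bound by truncating $v$ off the $\cU$-large set $\set{i \in I}{\n{v(i)} < r + \epsilon}$; your closedness step in fact needs no $\epsilon/2$, since the ultrametric inequality gives it directly, exactly as you wrote it.
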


\begin{proof}
The proof is completely parallel to the Archimedean counterpart.
\end{proof}

\begin{crl}
\label{equivalence relation}
Let $I \in \Set$ with an $\aleph_1$-complete ultrafilter $\cU$ on $I$, and $\cV \in \Ban(k)^I$. Set $V \coloneqq \prod_{i \in I} \cV(i)$ and $V_0 \coloneqq \set{v \in V}{\set{i \in I}{v(i) = 0} \in \cU}$. Then the following hold:
\bi
\item[(1)] The set $V_0$ is a closed $k$-linear subspace of $V$.
\item[(2)] For any $v \in V$, the norm $\n{v + V_0}$ of $v + V_0 \in V/V_0$ is a unique $r \in \R_{\geq 0}$ such that $\set{i \in I}{\n{v(i)} = r} \in \cU$ holds.
\ei
\end{crl}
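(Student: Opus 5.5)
The plan is to derive Corollary \ref{equivalence relation} from Proposition \ref{ultrapower}. The key step is to show that, because $\cU$ is $\aleph_1$-complete, the subspace defined there coincides with the $V_0$ defined here. We then read off (1) and (2) from that proposition, and upgrade its approximate description of the norm to an exact one.

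First, write $V_0'$ for the subspace of Proposition \ref{ultrapower}, namely the set of $v$ with $\set{i \in I}{\n{v(i)} < \epsilon} \in \cU$ for all $\epsilon \in \R_{> 0}$. We show $V_0 = V_0'$. If $v \in V_0$, then the set $\set{i}{v(i) = 0}$ lies in $\cU$ and is contained in each $\set{i}{\n{v(i)} < \epsilon}$, so $v \in V_0'$ because filters are upward closed. Conversely, if $v \in V_0'$, put $U_n \coloneqq \set{i}{\n{v(i)} < 2^{-n}} \in \cU$ for $n \in \omega$. By $\aleph_1$-completeness, $\bigcap_{n \in \omega} U_n \in \cU$. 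This intersection is exactly $\set{i}{\n{v(i)} = 0} = \set{i}{v(i) = 0}$, so $v \in V_0$. Assertion (1) then follows from Proposition \ref{ultrapower} (1), and the quotient $V/V_0$ is the same normed space as in that proposition.

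For (2), fix $v \in V$ and let $r$ be the number $\n{v + V_0}$ given by Proposition \ref{ultrapower} (2). For each $n \in \omega$, the set $W_n \coloneqq \set{i}{\v{\n{v(i)} - r} < 2^{-n}}$ lies in $\cU$. Again by $\aleph_1$-completeness, $\bigcap_{n} W_n = \set{i}{\n{v(i)} = r}$ lies in $\cU$. This gives existence.

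For uniqueness, suppose $r \neq r'$ both satisfy $\set{i}{\n{v(i)} = r} \in \cU$ and $\set{i}{\n{v(i)} = r'} \in \cU$. These two sets are disjoint, so their intersection, the empty set, would lie in $\cU$, contradicting that $\cU$ is a proper filter. Alternatively, any such $r'$ also satisfies the approximate condition of Proposition \ref{ultrapower} (2), whose uniqueness gives $r' = r$.

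The only real point is the countable-intersection step, which is where $\aleph_1$-completeness is used. Everything else is routine and relies on the earlier proposition, whose proof the paper takes for granted.
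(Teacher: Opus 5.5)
Your proof is correct and takes the same route as the paper. The $\aleph_1$-completeness of $\cU$, applied to the countably many radii $\epsilon = 2^{-n}$, identifies $V_0$ with the subspace of Proposition \ref{ultrapower} and turns the approximate norm condition there into the exact one; you also spell out the countable-intersection and uniqueness steps that the paper leaves implicit.
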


\begin{proof}
The assertion follows from Proposition \ref{ultrapower}, because the first countability of $\R$ and the $\aleph_1$-completeness of $\cU$ imply
\be
V_0 = \bigcap_{\epsilon \in \R_{> 0}} \set{v \in V}{\set{i \in I}{v(i) < \epsilon} \in \cU}
\ee
and
\be
\set{i \in I}{\n{v(i)} = r} \in \cU \Leftrightarrow \forall \epsilon \in \R_{> 0}[\set{i \in I}{\v{\n{v(i)} - r} < \epsilon} \in \cU].
\ee
\end{proof}

In the setting in Corollary \ref{equivalence relation}, we set $(\prod_{i \in I} \cV(i))/\cU \coloneqq V/V_0$ and denote by $[v]_{\cU}$ the equivalence class of $v$ for each $v \in V$.

%% file: Free_Filtration.tex
\section{Free Filtration}
\label{Free Filtration}

In this section, we introduce the notion of a free filtration as a non-Archimedean analogue of a filtration of free subgroups or $\Sigma$-cyclic subgroups (cf.\ \cite{Fuc15} Lemma 3/8.6 and \cite{CO25} Proposition 4.6), and study its properties. For this purpose, we first introduce the notion of a free Banach $k$-vector space.

\begin{dfn}
A $V \in \Ban(k)$ is said to be {\it free} if $V$ admits an orthonormal Schauder basis, is said to be {\it $\kappa$-free} for a $\kappa \in \Card$ if every $V' \in \cL_{< \kappa}(V)$ is free, and is said to be {\it almost free} if $V$ is $\rank(V)$-free.
\end{dfn}

The freeness of a Banach $k$-vector space of countable type is characterised in terms of a strictly $k$-Cartesian normed $k$-vector space (cf.\ \cite{BGR84} Proposition 2.7.5/2). By definition, a Banach $k$-vector space is free if and only if it is isomorphic to $\rC_0(I,k)$ for some $I \in \Set$.

\vs
We note that an isomorphism in this paper refers to an isometric $k$-linear isomorphism. If we formulated the freeness in terms of an admissible isomorphism, i.e.\ a bounded $k$-linear isomorphism whose inverse is also bounded, then under the assumption of the non-triviality of the valuation of $k$, every Banach $k$-vector space of countable type would be free (cf.\ \cite{BGR84} Theorem 2.8.2/2 and \cite{Sch99} Theorem 6) and hence every Banach $k$-vector space would be $\aleph_1$-free. Therefore, the situation would be less interesting.

\vs
We give characterisations of the freeness for simple cases.

\begin{prp}
\label{trivial}
If the valuation of $k$ is trivial, then for any $V \in \Ban(k)$, the following are equivalent:
\bi
\item[(1)] The Banach $k$-vector space $V$ is free.
\item[(2)] The Banach $k$-vector space $V$ is almost free, and is isomorphic to $k$ if $\dim_k = 1$.
\item[(3)] The inclusion $\n{V} \subset \v{k}$ holds.
\ei
\end{prp}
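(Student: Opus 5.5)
The plan is to prove the cycle $(1) \Rightarrow (3) \Rightarrow (1)$ first, and then attach $(2)$ via $(1) \Rightarrow (2) \Rightarrow (3)$. Throughout I use that $\v{k} = \ens{0,1}$ because the valuation is trivial.

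For $(1) \Rightarrow (3)$, I would use the remark after the definition: a free $V$ is isomorphic to $\rC_0(I,k)$ for some $I \in \Set$. For $c \in \rC_0(I,k)$ and $\epsilon = 1/2$, the set of $i$ with $\v{c(i)} \geq \epsilon$ is exactly the support of $c$, so every element of $\rC_0(I,k)$ is finitely supported. Its supremum norm is therefore $0$ or $1$, which gives $\n{V} \subset \ens{0,1} = \v{k}$.

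For $(3) \Rightarrow (1)$, I would take an arbitrary algebraic $k$-linear basis $E$ of $V$ (Zorn's lemma) and check the three conditions for an orthonormal Schauder basis.
\begin{itemize}
\item Condition (1) holds because every nonzero vector has norm $1$ by (3).
\item As above, $\rC_0(E,k)$ consists of the finitely supported functions, so the sums in condition (2) are finite linear combinations. A nonzero combination has norm $1 = \n{c}$, and the zero combination has norm $0 = \n{c}$, so condition (2) holds.
\item Every $v \in V$ is a finite linear combination of elements of $E$, which gives condition (3).
\end{itemize}
Completeness plays no role here, since the metric is discrete.

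For $(1) \Rightarrow (2)$, let $V$ be free. Every $V' \in \cL_{< \rank(V)}(V)$ inherits $\n{V'} \subset \v{k}$ from $(1) \Rightarrow (3)$, hence is free by $(3) \Rightarrow (1)$; so $V$ is almost free. If $\dim_k V = 1$, an orthonormal Schauder basis is a single vector $e$ of norm $1$, and $c \mapsto ce$ is an isometric isomorphism $k \to V$.

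The main step is $(2) \Rightarrow (3)$, and the only real obstacle is a small cardinality issue. Let $v \in V$ be nonzero; I must show $\n{v} = 1$.
\begin{itemize}
\item If $\dim_k V = 1$, then $V \cong k$ isometrically, so $\n{V} = \v{k}$ and we are done.
\item If $\dim_k V \geq 2$, the issue is to ensure that the line $\langle \ens{v} \rangle$ lies in $\cL_{< \rank(V)}(V)$, i.e.\ that $\rank(V) > 1$. By \cite{BGR84} Proposition 2.3.3/4, as used in the proof of Proposition \ref{uniqueness of dimension}, finite dimensional $k$-linear subspaces are closed. Hence a single vector generates only a closed line, which is not $V$, so $\rank(V) \geq 2$.
\end{itemize}
In the second case, almost freeness then makes $\langle \ens{v} \rangle = kv$ free. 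An orthonormal Schauder basis of $kv$ is some $cv$ with $c \neq 0$ and $\n{cv} = 1$. Since $\v{c} = 1$, this gives $\n{v} = 1$. The case $V = 0$ is trivial.
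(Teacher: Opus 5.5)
Your proof is correct and follows the same route as the paper: $(1)\Leftrightarrow(3)$ comes straight from the definition of an orthonormal Schauder basis, since coefficients in $\rC_0(E,k)$ are finitely supported under the trivial valuation, and $(2)$ is reduced to $(3)$ by testing freeness on the lines $\langle \ens{v} \rangle$. Your explicit check that $\dim_k V \geq 2$ forces $\rank(V) \geq 2$, so that these lines lie in $\cL_{< \rank(V)}(V)$, fills in a step the paper leaves implicit.
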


\begin{proof}
The equivalence between (1) and (3) follows from the definition of an orthonormal Schauder basis. Since (3) is equivalent to the condition that for any $v \in V$, $\n{v} \in \v{k}$ holds, i.e.\ (3) restricted to each closed $k$-linear subspace of $V$ of dimension $\leq 1$, (1) is equivalent to (1) restricted to each closed $k$-linear subspace of dimension $\leq 1$. Therefore, (1) is equivalent to (2) if $\dim_k V \neq 1$. If $\dim_k V = 1$, then (3) is equivalent to the condition that $V$ is isomorphic to $\rC_0(\ens{\ast},k) \cong k$, and every zero-dimensional Banach $k$-vector space is free.
\end{proof}

\begin{prp}
\label{local field}
If $k$ is a local field, for any $V \in \Ban(k)$, the following are equivalent:
\bi
\item[(1)] The Banach $k$-vector space $V$ is free.
\item[(2)] The Banach $k$-vector space $V$ is almost free, and is isomorphic to $k$ if $\dim_k = 1$.
\item[(3)] The inclusion $\n{V} \subset \v{k}$ holds.
\ei
\end{prp}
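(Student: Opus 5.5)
We prove (1)$\Rightarrow$(3)$\Rightarrow$(1) and then (1)$\Leftrightarrow$(2), reducing everything to one-dimensional subspaces as in the proof of Proposition \ref{trivial}.

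\textbf{(1)$\Rightarrow$(3).} Let $E$ be an orthonormal Schauder basis of $V$. For $v = \sum_{e \in E} c(e) e$ we have $\n{v} = \n{c} = \sup_{e \in E} \v{c(e)}$. Since $c \in \rC_0(E,k)$, this supremum is attained whenever $v \neq 0$, so it lies in $\v{k}$. Hence $\n{V} \subset \v{k}$.

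\textbf{(3)$\Rightarrow$(1).} This is the main step. We use the classical reduction modulo the maximal ideal. Let $\pi$ be a uniformiser and $\kappa_k = O_k/\pi O_k$ the finite residue field. Set $V^{\circ} \coloneqq \set{v \in V}{\n{v} \leq 1}$ and $\tilde{V} \coloneqq V^{\circ}/\pi V^{\circ}$, a $\kappa_k$-vector space. Choose a $\kappa_k$-linear basis $\tilde{E}$ of $\tilde{V}$ and lift it to $E \subset V^{\circ}$. By (3), a vector of norm $1$ is exactly one not in $\pi V^{\circ}$ (norms lie in $\v{\pi}^{\mathbb{Z}}$), so $\n{e} = 1$ for every $e \in E$.

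\emph{Orthonormality.} Take a finite sum $\sum c(e)e$ and rescale so that $\max \v{c(e)} = 1$. Its reduction is a nontrivial $\kappa_k$-combination of elements of $\tilde{E}$, hence nonzero. So the norm is $1$, and by continuity property (2) of an orthonormal Schauder basis holds for all $c \in \rC_0(E,k)$.

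\emph{Spanning.} Given $v \in V^{\circ}$, write $\tilde{v}$ as a finite combination of $\tilde{E}$ and lift it to get $v_0$ with $v - v_0 \in \pi V^{\circ}$. Iterating produces a convergent series $v = \sum_h \pi^h v_h$, where each $v_h$ is a finite $O_k$-combination of $E$. Collecting coefficients gives $c \in \rC_0(E,k)$ with $\sum c(e) e = v$. For general $v$, scale by a power of $\pi$ using (3) and discreteness.

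Finiteness of the residue field is not actually needed here; discreteness plus (3) suffices. The care needed is making sure $\n{v} \leq \v{\pi}^{h}$ is read correctly in the iteration.

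\textbf{(1)$\Leftrightarrow$(2).}
\begin{itemize}
\item \emph{(1)$\Rightarrow$(2).} Every $V' \in \cL_{<\rank(V)}(V)$ inherits $\n{V'} \subset \v{k}$, so it is free by (3)$\Rightarrow$(1). If $\dim_k V = 1$, then $V \cong \rC_0(\ens{\ast},k) \cong k$.
\item \emph{(2)$\Rightarrow$(1).} If $\dim_k V \geq 2$, then $\rank(V) \geq 2$, so every one-dimensional subspace $kv$ lies in $\cL_{<\rank(V)}(V)$. Such a subspace is free, giving $\n{v} \in \v{k}$, hence (3) and then (1). If $\dim_k V = 1$, the hypothesis gives $V \cong k$ directly. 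If $\dim_k V = 0$, $V$ is trivially free.
\end{itemize}

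Finite-dimensional subspaces are closed by \cite{BGR84} Proposition 2.3.3/4, so $kv$ is indeed $\langle \ens{v} \rangle$, and its rank is $1 < \rank(V)$ whenever $\dim_k V \geq 2$.
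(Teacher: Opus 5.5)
Your proof is correct. For (1)$\Leftrightarrow$(2) you follow the paper: it reduces to one-dimensional subspaces exactly as in Proposition \ref{trivial}. The genuine difference is in (3)$\Rightarrow$(1). There the paper simply invokes Schikhof duality. You instead give the classical direct argument:
\begin{itemize}
\item lift a basis of the residue space $V^{\circ}/\pi V^{\circ}$;
\item get orthonormality from the fact that, under (3), $V^{\circ} \setminus \pi V^{\circ}$ is exactly the unit sphere;
\item get spanning by successive approximation.
\end{itemize}
One step you leave terse is that the collected coefficients lie in $\rC_0(E,k)$ and sum to $v$. This follows from $|c(e) - c_H(e)| \leq |\pi|^H$ for the truncations $c_H$, together with the isometry you already established for finitely supported coefficients. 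Your route is elementary and self-contained. As you note, it uses only that the valuation is discrete, so the proposition holds for every complete discretely valued field, with no need for a finite residue field. The duality route depends on the compactness of $O_k$, and hence on the finite residue field, and on considerably heavier machinery. In exchange, given the literature, it yields a one-line proof.
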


\begin{proof}
The equivalence between (1) and (3) immediately follows from Schikhof duality (cf.\ \cite{ST02} Theorem 1.2 for the case $\ch(k) = 0$ and \cite{Mih21} Proposition 1.7 for a general case). The equivalence between (1) and (2) follows from the same argument as Proposition \ref{trivial}.
\end{proof}

By Proposition \ref{trivial} and Proposition \ref{local field}, we are mainly interested in the case where $\v{k^{\times}}$ is a dense subgroup of the multiplicative group $\R_{> 0}$.

\begin{exm}
\label{Reid}
Suppose $\v{k^{\times}} = \R_{> 0}$. If $V$ is one of
\be
\ell^{\infty}(\lambda_0,k), \rC_0(\lambda_1,\ell^{\infty}(\lambda_0,k)), \ell^{\infty}(\lambda_2,\rC_0(\lambda_1,\ell^{\infty}(\lambda_0,k))),
\ee
and so on for infinite cardinals $\lambda_0, \lambda_1, \lambda_2, \ldots$, then $\n{V} = \R_{> 0} = \v{k}$ holds. However, if there exists no measurable cardinal, then none of such a $V$ is free by \cite{Mih26} Theorem 4.10.
\end{exm}

We give a non-Archimedean analogue of the almost freeness of a free Abelian group, which follows from the freeness of a subgroup of a free Abelian group (cf.\ \cite{Fuc15} Theorem 3/1.6), and the almost $\Sigma$-cyclicness of a $\Sigma$-cyclic group, which follows from Kulikov's theorem, i.e.\ the $\Sigma$-cyclicness of a subgroup of a $\Sigma$-cyclic group (cf.\ \cite{Fuc15} Theorem 3/5.7):

\begin{prp}
\label{free implies almost free}
For any $V \in \Ban(k)$ and $\kappa \in \Card$, if $V$ is free, then $V$ is $\kappa$-free.
\end{prp}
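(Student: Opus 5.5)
The plan is to prove that every $V' \in \cL_{< \kappa}(V)$ is free, using the fact that freeness of $V$ gives an isomorphism $V \cong \rC_0(I,k)$. So fix an $S \in \cP_{< \kappa}(V)$ and $W \coloneqq \langle S \rangle$. The aim is to build an orthonormal Schauder basis of $W$. I will first treat the case where $W$ is of countable type and then reduce the general case to it by a transfinite filtration.

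\emph{Countable type case.} Here I will invoke \cite{BGR84} Proposition 2.7.5/2, which characterises freeness of a space of countable type as being strictly $k$-Cartesian. The hereditary nature of strict $k$-Cartesianness (\cite{BGR84} \S 2.4) should then give that a closed subspace of countable type of $\rC_0(I,k)$ is strictly $k$-Cartesian, and hence free. I will also check directly that $\n{W} \subset \n{\rC_0(I,k)} = \v{k}$. Concretely, I expect the construction to be a non-Archimedean Gram--Schmidt process. Enumerate a dense sequence of $W$. Inductively, choose each new vector to be a best approximation to the next generator modulo the finite dimensional (hence closed, \cite{BGR84} Proposition 2.3.3/4) span of the earlier ones. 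Then rescale it by a scalar to norm $1$. This step is exactly where the hypothesis that norms lie in $\v{k}$, and the existence of best approximations inside $\rC_0(I,k)$, are needed.

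\emph{General case.} Write $\mu \coloneqq \rank(W) < \kappa$. I will construct a continuous increasing chain $(W_\alpha)_{\alpha \leq \mu}$ of closed subspaces of $W$ together with an increasing chain $(J_\alpha)_{\alpha \leq \mu}$ of subsets of $I$, with $W_0 = 0$ and $W_\mu = W$, satisfying three conditions:
\bi
\item[(a)] $W_\alpha = W \cap \rC_0(J_\alpha,k)$;
\item[(b)] the coordinate projection $P_{J_\alpha} \colon \rC_0(I,k) \to \rC_0(J_\alpha,k)$ maps $W$ into $W_\alpha$;
\item[(c)] each successor step adds only countably many generators.
\ei
Given (a)--(c), $W_{\alpha+1} \cap \ker P_{J_\alpha}$ is a closed subspace of countable type of $\rC_0(J_{\alpha+1} \setminus J_\alpha, k)$, so it is free by the countable type case. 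Choose an orthonormal Schauder basis $E_\alpha$ of it for each $\alpha$. Condition (b) gives an orthogonal decomposition $W_{\alpha+1} = W_\alpha \oplus (W_{\alpha+1} \cap \ker P_{J_\alpha})$, since the pieces have disjoint coordinate supports. Then $\bigcup_\alpha E_\alpha$ is an orthonormal Schauder basis of $W$: orthonormality holds because the supports of the pieces are disjoint, and spanning follows by transfinite induction together with continuity of the chain at limits.

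\emph{Expected obstacle.} The main obstacle is achieving (b) by a closing-off argument, i.e.\ arranging that $W$ is invariant under $P_{J_\alpha}$. It is not enough to close $J_\alpha$ under supports of chosen elements of $W$. One must also ensure that $P_{J_\alpha}(w)$ lies in $W$ for every $w \in W$, and for an arbitrary closed subspace of $\rC_0(I,k)$ this can fail. If it does fail, I would instead try a weaker target: show that $W_\alpha$ is orthocomplemented in $W_{\alpha+1}$, by a countable closing-off in which $J_\alpha$ and $W_\alpha$ are enlarged alternately $\omega$ times. Since $W_{\alpha+1}/W_\alpha$ has countable type, the countable type case can then be applied to this quotient and the resulting orthonormal basis lifted via best approximations. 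The lifting is where the argument is most delicate, and where the dense-valuation case may genuinely require extra care.
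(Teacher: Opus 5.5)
\paragraph{Gap in the general case.} The problem is your general case, and it cannot be bypassed. Since $\kappa$ is arbitrary and every closed subspace $W \subset V$ satisfies $W = \langle W \rangle \in \cL_{< \kappa}(V)$ once $\kappa > \# V$, the statement is equivalent to the freeness of \emph{every} closed subspace of $\rC_0(I,k)$, of any rank.

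Your conditions (a)--(c) are not merely hard to arrange; in general they are impossible. Take $I$ uncountable, let $e_i$ be the canonical basis vectors, and set $W \coloneqq \set{x \in \rC_0(I,k)}{\sum_{i \in I} x(i) = 0}$. Suppose $\emptyset \neq J \neq I$, and pick $a \in J$ and $b \in I \setminus J$. Then $e_a - e_b \in W$, but $P_J(e_a - e_b) = e_a \notin W$. So (b) forces $J_\alpha \in \ens{\emptyset, I}$ for every $\alpha$. By (a), $W_0 = 0$ forces $J_0 = \emptyset$. By continuity, the chain must then jump from $W_\alpha = 0$ to $W_{\alpha+1} = W$ at a single successor step. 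This contradicts (c), because $\rank(W) = \# I > \aleph_0$. Yet this $W$ is free: for a fixed $0 \in I$, the set $\set{e_0 - e_i}{i \neq 0}$ is an orthonormal Schauder basis. So the filtration you propose does not exist even for very simple free subspaces.

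\paragraph{The fallback does not close the gap.} You derive the countable-type case for closed subspaces of some $\rC_0(J,k)$, via heredity of strict $k$-Cartesianness. The space $W_{\alpha+1}/W_\alpha$, however, is a quotient. You give no reason why it is isometric to such a subspace, or why it is strictly $k$-Cartesian.

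Lifting a basis of the quotient is also problematic. It requires the quotient norm $\n{w + W_\alpha}$ to be attained, that is, best approximations from the \emph{infinite-dimensional} subspace $W_\alpha$. Strict Cartesianness only gives best approximations from finite-dimensional subspaces, and for dense valuations such distances are genuinely not attained in general. For example, take $a \in k^{\omega}$ with $0 < \v{a(n)} < 1$ and $\v{a(n)} \to 1$, and let $D$ be the kernel of $x \mapsto \sum_n a(n) x(n)$ on $\rC_0(\omega,k)$. Then the distance from $e_0$ to $D$ is $\v{a(0)}$, but it is not attained.

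\paragraph{What the paper does and how to repair.} The paper avoids all of this by citing the known theorem that a closed subspace of a free Banach $k$-vector space is free (\cite{Sch99} Theorem 5 (ii)). You should either cite it as well, or supply a genuine closing-off argument. For instance, you would need one producing, at each successor step, a free quotient $W_{\alpha+1}/W_\alpha$ whose quotient norm is attained.

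Your countable-type step is fine in substance. It does rely on $\rC_0(I,k)$ itself being strictly $k$-Cartesian, which you assert without proof. This is true: a minimising sequence is bounded, which reduces the problem to finitely many coordinates, where a pivoted echelon basis gives best approximations.
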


\begin{proof}
The assertion follows from the freeness of a closed $k$-linear subspace of a free Banach $k$-vector space (cf.\ \cite{Sch99} Theorem 5 (ii)).
\end{proof}

Now we are ready for introducing a free filtration.

\begin{dfn}
Let $V \in \Ban(k)$. Set $\kappa \coloneqq \rank(V)$. A {\it free filtration of $V$} is a $\cV \in \cL_{< \kappa}(V)^{\kappa}$ satisfying the following:
\bi
\item[(1)] For any $\alpha \in \kappa$, $\cV(\alpha)$ is free.
\item[(2)] For any $\alpha \in \kappa$ and $\beta \in \alpha$, $\cV(\beta) \subset \cV(\alpha)$ holds.
\item[(3)] For any $\alpha \in \kappa \cap \Lim$, $\cV(\alpha) = \langle \bigcup_{\beta \in \alpha} \cV(\beta) \rangle$ holds.
\item[(4)] The equality $V = \bigcup_{\beta \in \kappa} \cV(\beta)$ holds.
\ei
We denote by $\FF(V)$ the set of free filtrations of $V$. 
\end{dfn}

\begin{rmk}
Let $V \in \Ban(k)$. Set $\kappa \coloneqq \rank(V)$. The notion of a free filtration of $V$ is equivalent to that of a cocontinuous functor $\cV \colon \kappa \to \cL_{< \kappa}(V)$ satisfying the following:
\bi
\item[(1)] For any $\alpha \in \kappa$, $\cV(\alpha)$ is free.
\item[(4)] The canonical morphism $\varinjlim (F_{\Comp} \circ \cV) \to V$ is an isomorphism.
\ei
\end{rmk}

\begin{prp}
\label{club restriction}
Let $V \in \Ban(k)$ and $\cV \in \FF(V)$. Set $\kappa \coloneqq \rank(V)$. For any club set $C$ of $\kappa$, the composite functor $\kappa \to \cL_{< \kappa}(V)$ of the enumeration $\kappa \hookrightarrow C$ and $\cV$ is a free filtration of $V$.
\end{prp}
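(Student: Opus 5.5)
Let $\iota \colon \kappa \to C$ be the order-preserving enumeration of $C$, which is a bijection because $C$ is unbounded in $\kappa$ and $\kappa$ is a cardinal; we may assume $\kappa$ is infinite and regular, as is implicit in talking about club sets of $\kappa$. Set $\cW \coloneqq \cV \circ \iota \in \cL_{< \kappa}(V)^{\kappa}$. The plan is to check the four conditions in the definition of a free filtration for $\cW$ one at a time, using the corresponding conditions for $\cV$ together with the facts that $\iota$ is strictly increasing and continuous (closedness of $C$) and that its image is unbounded.

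Conditions (1) and (2) are immediate. For (1), $\cW(\alpha) = \cV(\iota(\alpha))$ is free by (1) for $\cV$. For (2), $\beta \in \alpha$ implies $\iota(\beta) \in \iota(\alpha)$, so $\cW(\beta) \subset \cW(\alpha)$ by (2) for $\cV$. The values of $\cW$ lie in $\cL_{< \kappa}(V)$ because those of $\cV$ do.

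For (3), let $\alpha \in \kappa \cap \Lim$. Since $C$ is closed and $\iota$ is the enumeration of $C$, we have $\iota(\alpha) = \sup_{\beta \in \alpha} \iota(\beta)$, and this is a limit ordinal. Indeed, the supremum of $\iota[\alpha]$ lies below $\kappa$ by regularity, lies in $C$ by closedness, and is the $\alpha$-th element of $C$. By (3) for $\cV$,
\be
\cW(\alpha) = \langle \bigcup_{\gamma \in \iota(\alpha)} \cV(\gamma) \rangle .
\ee
Since $\iota[\alpha]$ is cofinal in $\iota(\alpha)$, every $\gamma \in \iota(\alpha)$ satisfies $\gamma \leq \iota(\beta)$ for some $\beta \in \alpha$. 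By (2) for $\cV$, the union above therefore equals $\bigcup_{\beta \in \alpha} \cW(\beta)$, which gives (3) for $\cW$.

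For (4), since $C$ is unbounded, every $\gamma \in \kappa$ satisfies $\gamma \leq \iota(\beta)$ for some $\beta$. Hence $V = \bigcup_{\gamma} \cV(\gamma) = \bigcup_{\beta} \cW(\beta)$.

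The only step needing care is the continuity of the enumeration at limit ordinals used in (3), which relies on $C$ being closed together with regularity of $\kappa$. I would state it as a standard fact about enumerations of club sets. The remaining steps are routine bookkeeping.
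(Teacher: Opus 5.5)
Your proposal is correct and takes the same route as the paper. The paper phrases your checks of (3) and (4) as cocontinuity of the enumeration (from closedness of $C$) and cofinality (from unboundedness of $C$). One small point: your appeal to regularity in step (3) is unnecessary, since $\sup_{\beta \in \alpha} \iota(\beta) \leq \iota(\alpha) < \kappa$ already holds once the enumeration is defined on all of $\kappa$.
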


\begin{proof}
We denote by $\iota$ the enumeration $\kappa \to C$. Set $\cV' \coloneqq \cV \circ \iota$. For any $\alpha \in \kappa$, $\cV'(\alpha) = \cV(\iota(\alpha))$ is free. Since $C$ is closed, $\iota$ is Scott continuous as an order-preserving map, and hence is cocontinuous as a functor. By the cocontinuities of $\iota$ and $\cV$, $\cV'$ is cocontinuous. Since $C$ is unbounded, $\iota$ is cofinal, and hence the canonical morphism $\varinjlim (F_{\Comp} \circ \cV') \to \varinjlim (F_{\Comp} \circ \cV)$ is an isomorphism. Therefore the canonical morphism $\varinjlim (F_{\Comp} \circ \cV') \to V$ is an isomorphism.
\end{proof}

We give a characterisation of almost free Banach $k$-vector spaces in terms of a free filtration, as an analogue of the characterisations of almost free Abelian groups (cf.\ \cite{Fuc15} Lemma 3/8.6 (a)) and almost $\Sigma$-cyclic groups (cf.\ \cite{CO25} Proposition 4.6).

\begin{prp}
\label{almost free}
Let $V \in \Ban(k)$. If $\# V = \rank(V)$ and $\rank(V)$ is regular, then the following are equivalent:
\bi
\item[(1)] The Banach $k$-vector space $V$ is almost free.
\item[(2)] The set $\FF(V)$ is non-empty.
\ei
\end{prp}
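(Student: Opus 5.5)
Set $\kappa \coloneqq \rank(V)$, which is a regular cardinal; in particular $\kappa$ is infinite, since regular cardinals are infinite in this convention. I will prove the two implications separately, mimicking \cite{Fuc15} Lemma 3/8.6 (a).

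(1) $\Rightarrow$ (2). Using $\# V = \kappa$, fix an enumeration $v \colon \kappa \to V$ that is surjective, and define $\cV \in \cL_{< \kappa}(V)^{\kappa}$ by transfinite recursion. Put $\cV(0) \coloneqq \langle \ens{v(0)} \rangle$. For a successor $\alpha$, put $\cV(\alpha) \coloneqq \langle \cV(\alpha^{-}) \cup \ens{v(\alpha^{-})} \rangle$. For $\alpha \in \kappa \cap \Lim$, put $\cV(\alpha) \coloneqq \langle \bigcup_{\beta \in \alpha} \cV(\beta) \rangle$.

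The key point is that each $\cV(\alpha)$ stays in $\cL_{< \kappa}(V)$. I will carry along generating sets $S_\alpha$ with $\# S_\alpha \leq \# \alpha + \aleph_0$, taking $S_\alpha \coloneqq \bigcup_{\beta \in \alpha} S_\beta$ at limits. Then $\# S_\alpha < \kappa$ follows from the regularity of $\kappa$ (this is where Proposition \ref{L is directed} with $\cf(\kappa) = \kappa$ is used), together with $\kappa > \aleph_0$ in the case $\kappa = \aleph_0$ failing. If $\kappa = \aleph_0$, finite-dimensional subspaces suffice.

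Condition (1) of a free filtration, the freeness of each $\cV(\alpha)$, is exactly the $\kappa$-freeness of $V$. Conditions (2) and (3) hold by construction. Condition (4) holds because $v(\alpha) \in \cV(\alpha + 1)$ and $v$ is surjective; this is the point where $\# V = \kappa$ is needed, since a mere generating set would only give $V = \langle \bigcup \cV(\beta) \rangle$.

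(2) $\Rightarrow$ (1). Let $\cV \in \FF(V)$ and $V' \in \cL_{< \kappa}(V)$, say $V' = \langle S \rangle$ with $\# S < \kappa$. By (4), each $s \in S$ lies in some $\cV(\beta_s)$. Regularity of $\kappa$ gives $\beta \coloneqq \sup_{s \in S} \beta_s < \kappa$, so $S \subset \cV(\beta)$, and since $\cV(\beta)$ is closed, $V' \subset \cV(\beta)$. The subspace $\cV(\beta)$ is free, and $V'$ is a closed $k$-linear subspace of it, so $V'$ is free by \cite{Sch99} Theorem 5 (ii), as in Proposition \ref{free implies almost free}.

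The main obstacle is bookkeeping rather than mathematics: checking that the recursively defined subspaces have rank $< \kappa$ at limit stages, and checking that clause (4) demands a literal union (hence the use of $\# V = \kappa$) rather than a closure. A degenerate case also needs care: if $\kappa$ is finite, the indexing by $\kappa$ would make clause (4) fail. I expect the regularity hypothesis to exclude this implicitly, and I would note the convention that regular cardinals are uncountable (the paper's $\Reg$), so that $\kappa \geq \aleph_1$.
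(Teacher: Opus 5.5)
Your proposal is correct and is essentially the paper's proof. For (1)$\Rightarrow$(2) you build a continuous increasing chain of closures of small sets exhausting $V$, each free by $\kappa$-freeness; for (2)$\Rightarrow$(1) you bound the indices via regularity and use that closed subspaces of free spaces are free.

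Your one deviation is an improvement. You enumerate single elements of $V$, whereas the paper takes a bijection $\kappa \to \cP_{< \kappa}(V)$, and $\# \cP_{< \kappa}(V) = \kappa^{< \kappa}$ can exceed $\kappa$ (e.g.\ $\kappa = \aleph_1 < 2^{\aleph_0}$). Your bound $\# S_{\alpha} \leq 1 + \# \alpha < \kappa$ needs neither that cardinal equality nor regularity.

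Finite $\kappa$ is excluded by $\# V = \rank(V)$ itself, as the paper observes, so there is no need to restrict to $\kappa \geq \aleph_1$. The case $\kappa = \aleph_0$ is already handled by your argument.
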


\begin{proof}
By $\# V = \rank(V)$, we have $\rank(V) > 1$. Set $\kappa \coloneqq \rank(V)$. By $\kappa > 1$ and the regularity of $\kappa$, we have $\kappa \geq \aleph_0$. By $\# V = \kappa$ and $\kappa \geq \aleph_0$, we have $\# \cP_{< \kappa}(V) = \kappa$.

\vs
Assume (1). Take a bijective map $S \colon \kappa \to \cP_{< \kappa}(V)$. We recursively define a functor $S' \colon \kappa \to \cP(V)$ by
\be
S'(\alpha) \coloneqq 
\left\{
\begin{array}{ll}
\emptyset & (\alpha = 0) \\
S'(\alpha^{-}) \cup S(\alpha^{-}) & (\alpha \in \Suc) \\
\bigcup_{\beta \in \alpha} S'(\beta) & (\alpha \in \Lim)
\end{array}
\right..
\ee
By the regularity of $\kappa$, we have $S' \in \cP_{< \kappa}(V)^{\kappa}$. By the definition of $S' \upharpoonright (\kappa \cap \Lim)$, $S'$ is a cocontinuous functor. By the definition of $S' \upharpoonright (\kappa \cap \Suc)$, $S'$ is cofinal. We define a functor $\cV \colon \kappa \to \cL_{< \kappa}(V)$ by
\be
\cV(\alpha) \coloneqq \langle S'(\alpha) \rangle.
\ee
Since $V$ is $\kappa$-free, $\cV(\alpha)$ is free for any $\alpha \in \kappa$. Since $S'$ is cocontinuous and the completion functor $\Norm(k) \to \Ban(k)$ is left adjoint to $F_{\Comp}$, $\cV$ is cocontinuous. Since $S'$ is cofinal, the canonical morphism $\varinjlim (F_{\Comp} \circ \cV) \to V$ is an isomorphism. Therefore, we have $\cV \in \FF(V)$.

\vs
Assume (2). Take a $\cV \in \FF(V)$. Let $V' \in \cL_{< \kappa}(V)$. Take an $S \in \cP_{< \kappa}(V)$ with $V' = \langle S \rangle$. For each $v \in S$, we denote by $\alpha_v \in \kappa$ the minimum of $\alpha \in \kappa$ with $v \in \cV(\alpha)$. Set $\alpha \coloneqq \sup_{v \in S} \alpha_v$. By $\# S < \kappa$ and the regularity of $\kappa$, we have $\alpha < \kappa$. This implies $S \subset \cV(\alpha)$, and hence $V' \subset \cV(\alpha)$. Since $\cV(\alpha)$ is free, so is $V'$ by \cite{Sch99} Theorem 5 (ii). Therefore, $V$ is $\kappa$-free, i.e.\ $V$ is almost free.
\end{proof}

%% file: aleph_1_Strongly_Compact_Case.tex
\section{$\aleph_1$-strongly Compact Case}
\label{aleph_1-strongly Compact Case}

We denote by $\lambda_k$ the successor cardinal of $\max \ens{\#(k/O_k),\aleph_0}$. If the valuation of $k$ is non-trivial, then $\lambda_k$ coincides with $\lambda$ in Corollary \ref{accessibility}. For example, if $k$ is separable, then we have $\lambda_k = \aleph_1$. We give a non-Archimedean analogue of the implication from almost freeness to freeness when the cardinality is $\aleph_1$-strongly compact (cf.\ \cite{CO25} \S 1 (1)):

\begin{thm}
\label{strongly compact}
Let $V \in \Ban(k)$ with $\# V = \rank(V)$. If $\rank(V)$ is $\lambda_k$-strongly compact, then the following are equivalent:
\bi
\item[(1)] The Banach $k$-vector space $V$ is free.
\item[(2)] The Banach $k$-vector space $V$ is almost free.
\ei
\end{thm}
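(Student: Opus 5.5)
(1)$\Rightarrow$(2) is Proposition \ref{free implies almost free} with $\kappa = \rank(V)$. The substance is (2)$\Rightarrow$(1), which I would prove by imitating the ultrapower argument of \cite{CO25} Theorem 1.1, i.e.\ the proof that $\aleph_1$-strong compactness forces $\kappa$-free groups to be free. The reading of ``$\lambda_k$-strongly compact'' I intend is: every $\kappa$-complete filter on a set extends to a $\lambda_k$-complete ultrafilter. This is the $\aleph_1$-strong-compactness definition of \S\ref{Convention} with $\aleph_1$ replaced by $\lambda_k$. Set $\kappa \coloneqq \rank(V)$.

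\emph{Step 1: set up the covering ultrafilter.} On $I \coloneqq \cL_{< \kappa}(V)$ consider the filter generated by the cones $\set{V' \in I}{V_0 \subset V'}$ for $V_0 \in I$. By Proposition \ref{L is directed}, this filter is $\cf(\kappa)$-complete. Strong compactness makes $\kappa$ regular, so the filter is $\kappa$-complete. Extend it to a $\lambda_k$-complete ultrafilter $\cU$ on $I$. Each $V' \in I$ is free, so fix an isomorphism $\phi_{V'} \colon V' \to \rC_0(E_{V'},k)$. Form the ultraproduct $W \coloneqq (\prod_{V' \in I} \rC_0(E_{V'},k))/\cU$ as in Corollary \ref{equivalence relation}; this is legitimate since $\cU$ is $\aleph_1$-complete. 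Define $j \colon V \to W$ by $v \mapsto [(\phi_{V'}(v))_{V'}]_{\cU}$, where the entry is set to $0$ if $v \notin V'$. Since $\set{V'}{v \in V'} \in \cU$, Corollary \ref{equivalence relation} (2) shows that $j$ is an isometric $k$-linear embedding.

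\emph{Step 2: show the image lands in a free space.} The aim is to prove that $j[V]$ lies in a closed subspace of $W$ that is free, namely the ``bounded-support'' ultraproduct $(\rC_0(E_{V'},k))_{V'}$ modulo $\cU$. Given $v$, each coordinate $\phi_{V'}(v) = \sum_e c_{V'}(e) e$ has coefficients with $\v{c_{V'}(e)} \leq \n{v}$. The point of $\lambda_k$-completeness is to stabilise these coefficients. Only countably many coefficient sizes matter, namely those $\geq 1/n$, and each relevant coefficient lies in a set of size $< \lambda_k$ modulo $O_k$-scaling. So by $\lambda_k$-completeness one can choose, $\cU$-almost everywhere, consistent reductions modulo $\pi O_k$ for the finitely many large coefficients, at each scale. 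Using this, I would construct the basis of the free space as the set of classes $[(e_{V'})]_{\cU}$ with $e_{V'} \in E_{V'}$. I would then verify that
\begin{itemize}
\item[(a)] these classes are orthonormal in $W$, which follows coordinatewise from Corollary \ref{equivalence relation} (2), since a finite sum has norm equal to the max of the coefficients on a $\cU$-large set;
\item[(b)] every $j(v)$ is a $\rC_0$-sum of them.
\end{itemize}
Then $\langle j[V] \rangle$ is a closed subspace of a free space, hence free by \cite{Sch99} Theorem 5 (ii). Since $j$ is isometric onto its closed image, $V$ is free.

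\emph{Main obstacle.} I expect the hardest part to be (b) in Step 2: the expansion of $j(v)$ must converge in the ultraproduct. Coordinatewise, the set of basis vectors carrying coefficient $\geq \epsilon$ is finite, but its size could be unbounded across $V'$. I would first try to use $\lambda_k$-completeness on the countable partition by that size to get a uniform bound $\cU$-a.e. Next I would partition by the residues of the coefficients in $k/O_k$-scaled classes, which are fewer than $\lambda_k$ many, to make the coefficients themselves constant $\cU$-a.e. Iterating this over $\epsilon = 1/n$ gives $j(v) - \sum (\text{stabilised terms})$ of norm $< \epsilon$ for each $\epsilon$, which yields (b). This is exactly where the hypothesis $\lambda_k$ rather than $\aleph_1$ is consumed.
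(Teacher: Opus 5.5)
Your architecture is the paper's. You take the cone filter on $\cL_{< \kappa}(V)$ and extend it to a $\lambda_k$-complete ultrafilter. You embed $V$ diagonally and isometrically into the ultraproduct of the free spaces $V'$. Step 2 is essentially Lemma \ref{ultrapower of basis}: the basis consists of classes of $\cU$-a.e.\ basis vectors, orthonormality comes from Corollary \ref{equivalence relation}, and density comes from sorting coefficients into fewer than $\lambda_k$ classes modulo $\{c \in k : \v{c} < \epsilon\}$. The conclusion then follows from Schikhof's theorem that closed subspaces of free spaces are free. Testing orthonormality on finite sums and passing to $\rC_0$-sums by continuity is fine. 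Your two-stage partition (number of large coefficients, then their residues) is the paper's single partition indexed by $\bigsqcup_{n \in \omega} C^n$. Two points, however, do not go through as written.

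\emph{The trivial valuation.} Your coefficient count presupposes a non-trivial valuation, since you reduce modulo $\pi O_k$. If $\v{k^{\times}} = \{1\}$, then $\lambda_k = \aleph_1$. But for $\epsilon \leq 1$ the coefficients have to be matched exactly, and they range over all of $k$, which may have cardinality $\kappa$ (e.g.\ $V = \rC_0(\kappa,k)$ with $\# k = \kappa$). Since $\cU$ contains all cones, it is not $\kappa^{+}$-complete, so nothing forces these coefficients to be $\cU$-a.e.\ constant. Indeed, choose each $E_{V'}$ to contain a fixed $v$, then rescale $E_{V'}$ by a scalar depending on $V'$ (harmless for a trivial valuation). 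Then $j(v)$ lies outside the closed span of your classes. The paper disposes of this case beforehand with Proposition \ref{trivial}: almost freeness gives $\n{V} \subset \{0,1\} = \v{k}$, hence freeness.

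\emph{The strong-compactness hypothesis.} You ask more of it than it provides, in two ways.
\begin{itemize}
\item The definition in \S \ref{Convention} only extends filters on $\kappa$, not on arbitrary sets. With it you must transport the cone filter to $\kappa$, i.e.\ know $\# \cL_{< \kappa}(V) = \kappa$. That is where the paper spends the hypothesis $\# V = \rank(V)$, which your argument never touches.
\item The claim ``strong compactness makes $\kappa$ regular'' is false. Under your reading, every cardinal above a $\lambda_k$-strongly compact one is again such. Under the paper's reading, every singular $\kappa \geq \aleph_1$ qualifies trivially: cover $\kappa$ by $\cf(\kappa)$ sets of size $< \kappa$; $\kappa$-completeness then forces $\bigcap \cF \neq \emptyset$, so $\cF$ lies in a principal ultrafilter.
\end{itemize}
For singular $\kappa$, take an increasing $\cf(\kappa)$-sequence in $\cL_{< \kappa}(V)$ with dense union; its cones have empty intersection. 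So the cone filter is not $\kappa$-complete and cannot be extended as you want. Regularity of $\rank(V)$ must therefore be assumed separately. The paper's proof also invokes ``the regularity of $\kappa$'' without deriving it, so this is a shared lacuna rather than a defect of your route.
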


We prove Theorem \ref{strongly compact} in a way parallel to the $\Sigma$-cyclic counterpart \cite{CO25} Theorem 1.1. For this purpose, we prepare a non-Archimedean analogue of \cite{CO25} Lemma 3.2:

\begin{lmm}
\label{ultrapower of basis}
Let $I \in \Set$ with a $\lambda_k$-complete ultrafilter $\cU$ on $I$, and $\cV \in \Ban(k)^I$. If $\cV(i)$ is free for any $i \in I$, then $(\prod_{I \in I} \cV(i))/\cU$ is free.
\end{lmm}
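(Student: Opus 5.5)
The plan is to write down an explicit candidate orthonormal Schauder basis of the ultraproduct and check the three axioms. The case where the valuation of $k$ is trivial is handled separately. There Proposition \ref{trivial} reduces freeness to the inclusion $\n{V} \subset \v{k}$. By Corollary \ref{equivalence relation} (2), the norm of $[v]_{\cU}$ equals $\n{v(i)}$ for $\cU$-almost every $i$. Since each $\cV(i)$ is free, $\n{v(i)} \in \v{k}$, so the ultraproduct satisfies the same inclusion and is free. From now on assume the valuation is non-trivial.

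For each $i \in I$ fix an orthonormal Schauder basis $E_i$ of $\cV(i)$. Let $E$ be the set of classes of $\prod_{i \in I} E_i$ modulo $\cU$-almost-everywhere equality, and send $[e]$ to $[e]_{\cU} \in W \coloneqq (\prod_{i \in I} \cV(i))/\cU$. This is well defined and of norm $1$ by Corollary \ref{equivalence relation}.

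\textbf{Orthonormality.} Take finitely many distinct classes $[e_1],\ldots,[e_n]$ and scalars $c_1,\ldots,c_n \in k$. Since $\cU$ is a filter, for $\cU$-almost every $i$ the elements $e_1(i),\ldots,e_n(i)$ are distinct members of $E_i$. Hence $\n{\sum_j c_j e_j(i)} = \max_j \v{c_j}$ almost everywhere, and so $\n{\sum_j c_j [e_j]_{\cU}} = \max_j \v{c_j}$. The assignment therefore extends to an isometric embedding $\rC_0(E,k) \to W$. Its image is closed, being complete. So it suffices to show that the image is dense.

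\textbf{Density.} This is the main step. Let $[v]_{\cU} \in W$ with $\n{v(i)} \leq C$, and fix $\epsilon \in \R_{> 0}$. Expand $v(i) = \sum_{e \in E_i} c_i(e) e$ and let $n_i$ be the number of $e \in E_i$ with $\v{c_i(e)} \geq \epsilon$. The map $i \mapsto n_i$ takes countably many values, so by $\aleph_1$-completeness it is constant, say $n$, on a set in $\cU$. Enumerate the corresponding basis vectors as $e_{i,1},\ldots,e_{i,n}$ with coefficients $c_{i,j}$, all satisfying $\epsilon \leq \v{c_{i,j}} \leq C$. Each family $(e_{\cdot,j})$ defines an element of $E$.

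The crux is showing that each coefficient $c_{i,j}$ is $\cU$-almost constant, and this is exactly where $\lambda_k$ enters. For each $m \in \omega$, partition the ball of radius $C$ into cosets of the ball of radius $1/(m+1)$. Choose $a \in k^{\times}$ with $\v{a}$ large enough that the ball of radius $C$, scaled by $a$, lies in a ball of radius $\v{a}C$, while $a$ times the ball of radius $1/(m+1)$ contains $O_k$. Scaling by $a$ then shows there are at most $\#(k/O_k)$ cosets, which is fewer than $\lambda_k$. By $\lambda_k$-completeness one coset contains $c_{i,j}$ for $\cU$-almost every $i$. Choosing these cosets nested in $m$ (successive refinements of the previous piece) and intersecting the countably many resulting sets in $\cU$, we get a single $d_j \in k$ with $c_{i,j} = d_j$ for $\cU$-almost every $i$: the chosen cosets form a Cauchy net whose limit is $d_j$ by completeness of $k$.

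It follows that $v(i) - \sum_j d_j e_{i,j}$ has norm $< \epsilon$ for $\cU$-almost every $i$. Thus $[v]_{\cU}$ lies within $\epsilon$ of the image of $\rC_0(E,k)$. As $\epsilon$ is arbitrary, the image is dense, hence equal to $W$, and $W$ is free.

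The expected obstacle is the scaling bound on the number of cosets when $\v{k^{\times}}$ is dense rather than discrete. One may need to enlarge $C$ or shrink the radius slightly so that both sit in $\v{k^{\times}}$ up to a bounded factor, but a factor-of-$\v{a}$ slack does not affect the cardinality bound by $\#(k/O_k)$.
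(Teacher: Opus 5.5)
Your proof is correct and follows the paper's skeleton. The trivial-valuation case goes through Proposition \ref{trivial}. The candidate basis consists of ultraproduct classes of families that are $\cU$-almost everywhere basis vectors. Orthonormality comes from the ultrafilter property together with $\aleph_1$-completeness, and $\lambda_k$-completeness controls the coefficients in the density step.

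The execution differs in two places.
\begin{itemize}
\item For density, the paper partitions $I$ in one stroke into the sets $I_{n,c}$. These are indexed by the number $n$ of coefficients of absolute value $\geq \epsilon$ and by their residues $c \in C^n$ modulo the open $\epsilon$-ball, an index set of size $< \lambda_k$, and the paper takes a cell lying in $\cU$. This pins the coefficients only up to $\epsilon$, which is all an $\epsilon$-approximation needs. You instead fix $n$ by $\aleph_1$-completeness and then pin each coefficient exactly, using coset partitions at radii $1/(m+1)$ and a countable intersection. This costs an extra limiting step but proves the cleaner fact that, for non-trivial valuation, every family in $k^I$ is $\cU$-almost constant, i.e.\ $\ell^{\infty}(I,k)/\cU \cong k$.
\item For orthonormality, you work with finite combinations and extend by continuity to an isometric embedding $\rC_0(E,k) \to W$ with closed image. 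This is tidier than the paper's argument, which evaluates $\sum_{f \in S} c(f) e_f(i)$ pointwise for countably supported $c$ and tacitly identifies its class with $\sum_{f} c(f) f$.
\end{itemize}

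Three small points need fixing.
\begin{itemize}
\item The obstacle you anticipate does not arise. Take $\pi$ with $0 < \v{\pi} < 1$. For large $m$, every open ball of radius $r > 0$ lies between $\pi^m O_k$ and $\pi^{-m} O_k$, so it has exactly $\#(k/O_k)$ cosets in $k$. Neither the bound $C$ nor the completeness of $k$ is needed, since any index in the final $\cU$-set supplies $d_j$.
\item Define $E$ via $\set{e \in \prod_{i \in I} \cV(i)}{\set{i \in I}{e(i) \in E_i} \in \cU}$, as the paper does, rather than via $\prod_{i \in I} E_i$. The latter is empty as soon as some $\cV(i) = 0$, even on a nonempty set outside $\cU$. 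In that case your families $(e_{\cdot,j})$ cannot be extended off the good set.
\item Distinct classes are almost everywhere distinct because $\cU$ is an ultrafilter, not merely a filter.
\end{itemize}
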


\begin{proof}
Since the bounded direct product and the quotient by a closed subspace preserve the triviality of a norm, it suffices to consider the case where the valuation of $k$ is non-trivial by Proposition \ref{trivial}.

\vs
For each $i \in I$, take an orthonormal Schauder basis $E_i$ of $\cV(i)$. Set
\be
E & \coloneqq & \set{v \in \prod_{i \in I} \cV(i)}{\set{i \in I}{v(i) \in E_i} \in \cU} \\
E_{\cU} & \coloneqq & \set{[e]_{\cU}}{e \in E}.
\ee
It suffices to show that $E_{\cU}$ is an orthonormal Schauder basis of $(\prod_{i \in I} \cV(i))/\cU$. Let $f \in E_{\cU}$. Take an $e_f \in E$ with $[e_f]_{\cU} = f$, and set $I'_f \coloneqq \set{i \in I}{e_f(i) \in E_i}$. We have $I'_f \in \cU$ by $e_f \in E$, and $I'_f \subset \set{i \in I}{\n{e_f(i)} = 1}$ because $E_i$ is an orthonormal Schauder basis of $\cV(i)$. We obtain $\n{f} = \n{[e_f]_{\cU}} = 1$ by $I'_f \in \cU$ and Corollary \ref{equivalence relation} (2). 

\vs
Let $c \in \rC_0(E_{\cU},k)$. We show $\n{\sum_{f \in E_{\cU}} c(f) f} = \n{c}$. Set $S \coloneqq \set{f \in E_{\cU}}{c(f) \neq 0}$. By $c \in \rC_0(E_{\cU},k)$, we have $\# S \leq \aleph_0$, and hence $\# \cP_{= 2}(S) \leq \aleph_0$. Set $I'_{f,f'} \coloneqq \set{i \in I}{e_f(i) \neq e_{f'}(i)}$ for each $(f,f') \in S^2$. For any $(f,f') \in S^2$ with $f \neq f'$, we have $I'_{f,f'} \in \cU$ by the definition of the quotient map $\prod_{i \in I} \cV(i) \twoheadrightarrow (\prod_{i \in I} \cV(i))/\cU$. Set
\be
I' \coloneqq \bigcap \left(\ens{I} \cup \set{I'_f}{f \in S} \cup \set{I'_{f,f'}}{(f,f') \in S^2 \land f \neq f'} \right).
\ee
We have $I' \in \cU$ by $\# \cP_{= 2}(S) \leq \aleph_0$ and the $\aleph_1$-completeness of $\cU$. For any $i \in I'$, we have $e_f(i) \in E_i$ for any $f \in S$ and $e_f(i) \neq e_{f'}(i)$ for any $(f,f') \in S^2$ with $f \neq f'$ by the definition of $I'$, and hence
\be
\n{\sum_{f \in E_{\cU}} c(f) e_f(i)} = \n{\sum_{f \in S} c(f) e_f(i)} = \n{c \upharpoonright S} = \n{c}
\ee
because $E_i$ is an orthonormal Schauder basis of $\cV(i)$. This implies
\be
\set{i \in I}{\n{\sum_{f \in S} c(f) e_f(i)} = \n{c}} \in \cU
\ee
by $I' \in \cU$, and hence $\n{\sum_{f \in E_{\cU}} c(f) f} = \n{c}$ by Corollary \ref{equivalence relation} (2).

\vs
Let $\ol{v} \in (\prod_{i \in I} \cV(i))/\cU$. We show that there exists a $c \in \rC_0(E_{\cU},k)$ such that $\sum_{f \in E_{\cU}} c(f) f = \ol{v}$. By the completeness of $\rC_0(E_{\cU},k)$, it suffices to show that for any $\epsilon \in \R_{> 0}$, there exists a $c \in k^{\oplus E_{\cU}}$ such that $\n{\ol{v} - \sum_{f \in E_{\cU}} c(f) f} < \epsilon$.

\vs
Set $B \coloneqq \set{c \in k}{\v{c} < \epsilon}$. Take a complete representative $C$ of the canonical projection $k \twoheadrightarrow k/B$. Since the valuation of $k$ is non-trivial, we have $\# C = \#(k/B) = \#(k/O_k) < \lambda_k$, and hence $\#(\bigsqcup_{n \in \omega} C^n) < \lambda_k$ by $\lambda_k > \aleph_0$.

\vs
Take a representative $v \in \prod_{i \in I} \cV(i)$ of $\ol{v}$. For each $n \in \omega$ and $c \in C^n$, we denote by $I_{n,c}$ the set of $i \in I$ for which there exists an injective map $e \colon \omega_{< n} \hookrightarrow E_i$ such that $\n{v(i) - \sum_{h \in \omega_{< n}} c(h) e(h)} < \epsilon$. We show $I = \bigcup_{n \in \omega} \bigcup_{c \in C^n} I_{n,c}$.

\vs
Let $i \in I$. Since $E_i$ is an orthonormal Schauder basis of $\cV(i)$, there exists a $c \in \rC_0(E_i,k)$ such that $v(i) = \sum_{e \in E_i} c(e) e$. Set $E'_i \coloneqq \set{e \in E_i}{\v{c(e)} \geq \epsilon}$ and $n \coloneqq \# E'_i$. By the definition of $\rC_0(E_i,k)$, we have $n < \aleph_0$. Take an injective map $e \colon \omega_{< n} \hookrightarrow E_i$ onto $E'_i$. We have
\be
\n{v(i) - \sum_{h \in \omega_{< n}} c(e(h)) e(h)} = \n{\sum_{e \in E_i} c(e) e - \sum_{e \in E'_i} c(e) e} = \n{\sum_{e \in E_i \setminus E'_i} c(e) e} < \epsilon,
\ee
and hence $i \in I_{n,c \circ e}$.

\vs
By $\bigcup_{n \in \omega} \bigcup_{c \in C^n} I_{n,c} = I \in \cU$, $\#(\bigsqcup_{n \in \omega} C^n) < \lambda_k$, and the $\lambda_k$-completeness of $\cU$, there exists a pair $(n,c)$ of an $n \in \omega$ and a $c \in C^n$ such that $I_{n,c} \in \cU$. For each $i \in I_{n,c}$, take an injective map $e_i \colon \omega_{< n} \hookrightarrow E_i$ such that $\n{v(i) - \sum_{h \in \omega_{< n}} c(h) e_i(h)} < \epsilon$. For each $h \in \omega_{< n}$, we define $e'_h \in E$ by
\be
e'_h(i) \coloneqq 
\left\{
\begin{array}{ll}
e_i(h) & (i \in I_{n,c}) \\
0 & (i \notin I_{n,c})
\end{array}
\right.
\ee
For any $i \in I_{n,c}$, we have
\be
\n{v(i) - \sum_{h \in \omega_{< n}} c(h) e'_h(i)} = \n{v(i) - \sum_{h \in \omega_{< n}} c(h) e_i(h)} < \epsilon.
\ee
This implies $\n{\ol{v} - \sum_{h \in \omega_{< n}} c(h) [e'_h]_{\cU}} < \epsilon$. By $\set{[e'_h]_{\cU}}{h \in \omega_{< n}} \in \cP_{< \aleph_0}(E_{\cU})$, we conclude the existence of a $c' \in k^{\oplus E_{\cU}}$ such that $\n{\ol{v} - \sum_{f \in E_{\cU}} c'(f) f} < \epsilon$.
\end{proof}

\begin{proof}[Proof of Theorem \ref{strongly compact}]
By Proposition \ref{free implies almost free}, (1) implies (2). Assume (2). By $\# V = \rank(V)$, we have $\dim_k V \geq \rank(V) \geq \aleph_0$. Therefore it suffices to consider the case where the valuation of $k$ is non-trivial by Proposition \ref{trivial}. Then we have $\aleph \leq \# k$ by the completeness of $k$. Set $\kappa \coloneqq \rank(V)$. For a $V' \in \cL_{< \kappa}(V)$, set $\cU_{V'} \coloneqq \set{V'' \in \cL_{< \kappa}(V)}{V' \subset V''}$. Set
\be
\cF \coloneqq \set{U \in \cP(\cL_{< \kappa}(U))}{\exists V' \in \cL_{< \kappa}(V)[\cU_{V'} \subset U]}.
\ee
For any $V' \in \cL_{< \kappa}(V)$, we have $V' \in \cU_{V'}$ and hence $\cU_{V'} \neq \emptyset$. For any $S \in \cP_{< \kappa}(\cL_{< \kappa}(V)) \setminus \ens{\emptyset}$, we have $\langle \bigcup S \rangle \in \cL_{< \kappa}(V)$ by the definition of $\cL_{< \kappa}(V)$ and the regularity of $\kappa$, and $\cU_{\langle \bigcup S \rangle} \subset \bigcap_{V' \in S} \cU_{V'}$. Therefore, $\cF$ is a $\kappa$-complete filter on $\cL_{< \kappa}(V)$. We have
\be
\# \cL_{< \kappa}(V) \leq \# \cP_{< \kappa}(V) = \kappa
\ee
by $\kappa \geq \aleph > \aleph_0$, and
\be
\# \cL_{< \kappa}(V) \geq \# (\cL_{< 2}(V) \setminus \ens{0}) \geq \dim_k V \geq \rank(V) = \kappa.
\ee
Therefore, we obtain $\# \cL_{< \kappa}(V) = \kappa$. Since $\kappa$ is $\lambda_k$-strongly compact and $\cF$ is a $\kappa$-complete filter on $\cL_{< \kappa}(V)$, there exists a $\lambda_k$-complete ultrafilter $\cU$ on $\cL_{< \kappa}(V)$. For each $v \in V$, we define $\iota_v \in \prod_{V' \in \cL_{< \kappa}(V)} V'$ by
\be
\iota_v(V') \coloneqq
\left\{
\begin{array}{ll}
v & (v \in V') \\
0 & (v \notin V')
\end{array}
\right..
\ee
We define a map $\iota \colon V \to (\prod_{V' \in \cL_{< \kappa}(V)} V')/\cU$ by
\be
\iota(v) \coloneqq [(\iota_v(V'))_{V' \in \cL_{< \kappa}(V)}]_{\cU}.
\ee
We show that $\iota$ is an isometric $k$-linear homomorphism.

\vs
Let $(v_0,v_1) \in V^2$. We show $\iota(v_0 + v_1) = \iota(v_0) + \iota(v_1)$. Set $V' \coloneqq \langle \ens{v_0,v_1} \rangle \in \cL_{< \kappa}(V)$. For any $V'' \in \cU_{V'}$, we have
\be
\iota_{v_0 + v_1}(V'') = v_0 + v_1 = \iota_{v_0}(V'') + \iota_{v_1}(V'').
\ee
Therefore, we obtain $\iota(v_0 + v_1) = \iota(v_0) + \iota(v_1)$ by $\cU_{V'} \in \cF \subset \cU$.

\vs
Let $(c,v) \in k \times V$. We show $\iota(cv) = c \iota(v)$. Set $V' \coloneqq \langle \ens{v} \rangle \in \cL_{< \kappa}(V)$. For any $V'' \in \cU_{V'}$, we have
\be
\iota_{cv}(V'') = cv = c \iota_{v}(V'').
\ee
Therefore, we obtain $\iota(cv) = c \iota(v)$ by $\cU_{V'} \in \cF \subset \cU$.

\vs
Let $v \in V$. We show $\n{\iota(v)} = \n{v}$. Set $V' \coloneqq \langle \ens{v} \rangle \in \cL_{< \kappa}(V)$. For any $V'' \in \cU_{V'}$, we have
\be
\n{\iota_v(V'')} = \n{v}.
\ee
Therefore, we obtain $\n{\iota(v)} = \n{v}$ by $\cU_{V'} \in \cF \subset \cU$ and Corollary \ref{equivalence relation} (2).

\vs
Since $V$ is complete and $\iota$ is an isometric $k$-linear homomorphism, $\iota[V]$ is a closed $k$-linear subspace of $(\prod_{V' \in \cL_{< \kappa}(V)} V')/\cU$ isomorphic to $V$. By Lemma \ref{ultrapower of basis} and the almost freeness of $V$, $(\prod_{V' \in \cL_{< \kappa}(V)} V')/\cU$ is free. Therefore, $\iota[V]$ is free by \cite{Sch99} Theorem 5 (ii). We conclude that $V$ is free.
\end{proof}

\begin{crl}
\label{aleph_1 strongly compact}
Let $V \in \Ban(k)$ with $\# V = \rank(V)$. If $k$ is a closed subfield of $\Cp$ and $\rank(V)$ is $\aleph_1$-strongly compact, then the following are equivalent:
\bi
\item[(1)] The Banach $k$-vector space $V$ is free.
\item[(2)] The Banach $k$-vector space $V$ is almost free.
\ei
\end{crl}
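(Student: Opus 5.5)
This follows from Theorem \ref{strongly compact} once we check that $\lambda_k = \aleph_1$ whenever $k$ is a closed subfield of $\Cp$. Then the hypothesis that $\rank(V)$ is $\aleph_1$-strongly compact is exactly the hypothesis that it is $\lambda_k$-strongly compact, and the equivalence of (1) and (2) is the conclusion of the theorem. So the only thing to prove is the cardinality bound $\max \ens{\#(k/O_k),\aleph_0} = \aleph_0$, i.e.\ $\#(k/O_k) \leq \aleph_0$.

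I would obtain this from the separability of $k$, which the paper already notes gives $\lambda_k = \aleph_1$. The field $\Cp$ is separable: $\overline{\mathbb{Q}} \subset \overline{\mathbb{Q}_p}$ is countable and dense in $\Cp$, since $\mathbb{Q}$ is dense in $\mathbb{Q}_p$ and every finite extension of $\mathbb{Q}_p$ is generated by an algebraic number (Krasner's lemma). A subspace of a separable metric space is separable, so $k$ has a countable dense subset $D$.

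To turn separability into the bound, note that the cosets $c + O_k$ are open in $k$, because $O_k$ is the closed unit ball, which is open in an ultrametric space. They are pairwise disjoint and cover $k$. Each coset therefore meets $D$, and choosing one element of $D$ in each coset gives an injection $k/O_k \hookrightarrow D$. Hence $\#(k/O_k) \leq \aleph_0$ and $\lambda_k = \aleph_0^{+} = \aleph_1$.

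No step here is a real obstacle. The only point needing care is the separability of $\Cp$, which is standard and which I would cite rather than reprove. Everything else reduces directly to Theorem \ref{strongly compact}.
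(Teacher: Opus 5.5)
Your proposal is correct and matches the paper's proof, which derives the corollary from Theorem \ref{strongly compact} using $\lambda_k = \aleph_1$. Your argument that $\#(k/O_k) \leq \aleph_0$ (separability of $\Cp$, and each open coset of $O_k$ meeting a countable dense subset of $k$) fills in the detail that the paper only asserts in its remark that separability of $k$ gives $\lambda_k = \aleph_1$.
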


\begin{proof}
The assertion immediately follows from Theorem \ref{strongly compact} by $\lambda_k = \aleph_1$.
\end{proof}

%% file: Weakly_Compact_Case.tex
\section{Weakly Compact Case}
\label{Weakly Compact Case}

We give a non-Archimedean analogue of the implication from almost freeness to freeness when the cardinality is weakly compact (cf.\ \cite{CO25} \S 1 (2)):

\begin{thm}
\label{weakly compact}
Let $V \in \Ban(k)$. If $\# V = \rank(V)$ and $\rank(V)$ is weakly compact, then the following are equivalent:
\bi
\item[(1)] The Banach $k$-vector space $V$ is free.
\item[(2)] The Banach $k$-vector space $V$ is almost free.
\ei
\end{thm}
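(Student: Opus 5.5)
(1) implies (2) by Proposition \ref{free implies almost free}, so the work is the converse. I will follow the classical argument for weakly compact $\kappa$ via free filtrations and reflection. Set $\kappa \coloneqq \rank(V)$. Since $\kappa$ is weakly compact, it is regular and uncountable. By Proposition \ref{almost free} and (2) there is a $\cV \in \FF(V)$.

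The first step is a criterion for freeness of $V$ in terms of $\cV$. Call $\alpha \in \kappa$ \emph{bad} if $\cV(\alpha')/\cV(\alpha)$ is not free for some $\alpha' \in \kappa_{> \alpha}$ (quotient norm). I claim that if the bad set $B$ is non-stationary, then $V$ is free. Take a club $C$ disjoint from $B$ and restrict $\cV$ along $C$; by Proposition \ref{club restriction} this is again a free filtration, now with every successive quotient $\cV(\alpha+1)/\cV(\alpha)$ free (after re-indexing). One then builds an orthonormal Schauder basis by transfinite recursion. At successor stages, lift an orthonormal basis of the quotient; this is the non-Archimedean splitting step. Given an orthonormal Schauder basis $\bar{E}$ of $W/W_0$ with $W_0$ complete, choose lifts $e$ of norm exactly $1$. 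The infimum defining the quotient norm need not be attained, so I would expect to use the freeness of $W_0$: distances to a free space are attained when they lie in $\v{k}$, by the $\epsilon$-approximation argument of Lemma \ref{ultrapower of basis}. I would then check that the lifts together with a basis of $W_0$ form an orthonormal Schauder basis of $W$, via the standard fact that $\n{w_0 + \sum c(e) e} = \max\ens{\n{w_0},\n{c}}$ when the lifts are orthogonal to $W_0$. At limit stages, take unions, using continuity (3) of the filtration and completeness.

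The second step handles the case where $B$ is stationary, and derives a contradiction from weak compactness. For each bad $\alpha$, choose a witness $\alpha' > \alpha$; intersecting with a club, we may assume the witnesses are below the next element of $B$. Weak compactness gives $\Pi^1_1$-reflection: there is an inaccessible $\mu < \kappa$ such that $B \cap \mu$ is stationary in $\mu$ and $\cV(\mu)$ is the union of $\cV \upharpoonright \mu$. Here the first-order structure coding $V$, $\cV$, and norms has size $\kappa$ since $\# V = \kappa$, so it can be encoded on $V_\kappa$ or via the tree property. Then $\cV(\mu)$ has rank $\mu$ and, by continuity at $\mu$, carries the free filtration $\cV \upharpoonright \mu$ with stationary bad set. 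It is free because $V$ is almost free and $\cV(\mu) \in \cL_{<\kappa}(V)$. So it suffices to prove the converse of the first step: if $W$ is free of regular uncountable rank $\mu$, then every free filtration of $W$ has non-stationary bad set.

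This converse is the step I expect to be hardest. Fix an orthonormal Schauder basis $E$ of $W$ and use a closure argument: the set of $\alpha$ for which $\cV(\alpha) = \langle E \cap \cV(\alpha) \rangle$ is club. This is where countable supports are needed, since each vector involves only countably many basis elements and $\mu > \aleph_0$ is regular, so a standard $\omega$-step closure works. For such $\alpha$ and such $\alpha' > \alpha$, $\cV(\alpha')/\cV(\alpha) \cong \langle E \cap \cV(\alpha') \setminus \cV(\alpha) \rangle$ is free, since the quotient of $\rC_0(E')$ by $\rC_0(E'')$ is $\rC_0(E' \setminus E'')$ isometrically. Finally, one must confirm that the definition of ``bad'' is invariant under this club intersection, and that the successor lifting in the first step really yields norm-$1$ lifts orthogonal to the subspace. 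The latter uses that $\n{W} \subset \v{k}$-type attainment holds for free $W_0$ inside $W$. That attainment is the subtle non-Archimedean point I would verify first, possibly replacing ``free quotient'' by ``$W_0$ has an orthogonal free complement'' in the definition of bad to make the first step immediate.
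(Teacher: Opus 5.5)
Your first step fails at the successor stage, and the fact you invoke to rescue it (that distances to a free subspace are attained when they lie in $\v{k}$) is false. Suppose the valuation of $k$ is dense. Choose $f_n \in k$ with $\v{f_0} < \v{f_1} < \cdots < 1$ and $\v{f_n} \to 1$. Let $(\epsilon_n)_{n \in \omega}$ be the canonical basis of $W \coloneqq \rC_0(\omega,k)$, and put $f(x) \coloneqq \sum_{n \in \omega} f_n x(n)$ and $W_0 \coloneqq \ker f$. Then:
\begin{itemize}
\item[(a)] The vectors $\epsilon_n - (f_n/f_{n+1}) \epsilon_{n+1}$ form an orthonormal Schauder basis of $W_0$, so $W_0$ is free.
\item[(b)] The quotient norm is $\n{x + W_0} = \v{f(x)}$, so $W/W_0 \cong k$ is free.
\item[(c)] For every $x \neq 0$, the maximum of $\v{f_n x(n)}$ is attained at some $n$ with $x(n) \neq 0$, and there $\v{f_n} < 1$. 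Hence $\v{f(x)} < \n{x}$.
\end{itemize}
By (c), an element of $W/W_0$ of norm $1 \in \v{k}$ has no lift of norm $1$, and $W_0$ has no isometric free complement in $W$. This happens even though $W_0$, $W$ and $W/W_0$ are all free, so it can occur inside a free filtration. With badness defined by non-freeness of the quotients $\cV(\alpha')/\cV(\alpha)$, you therefore have no proof that a non-stationary bad set forces $V$ to be free. Without that, you cannot derive that the bad set is stationary when $V$ is not free, and your reflection step never starts.

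The replacement you float in your last sentence is therefore necessary, not optional. Call $\alpha$ bad if, for some $\alpha' > \alpha$, there is no free closed $W^{\perp} \subset \cV(\alpha')$ with $\cV(\alpha) \oplus W^{\perp} \to \cV(\alpha')$ an isometric isomorphism; this is exactly $\alpha \in E_{\cV}$. Then your first step becomes the transfinite union of bases, as in Lemma \ref{cofree characterisation}.

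You must also add one ingredient. A free supplement of $\cV(\alpha)$ in $\cV(\alpha'')$, intersected with an intermediate $\cV(\alpha')$, is a supplement there, and it is free as a closed subspace of a free space (Lemma \ref{cofree direct summand preserving}). This has two consequences:
\begin{itemize}
\item[(i)] The set of witnesses of a bad $\alpha$ is a final segment of $\kappa$. So your closure-point club plus a single stationary reflection identifies $E_{\cV} \cap \mu$ with $E_{\cV \upharpoonright \mu}$. This is a leaner substitute for the simultaneous reflection the paper imports.
\item[(ii)] Your basis-closure club argument in the third step then also covers $\alpha'$ outside the club.
\end{itemize}
With these changes your argument coincides with the paper's route through Lemma \ref{exceptional set criterion}.
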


We prove Theorem \ref{weakly compact} in a way parallel to the $\Sigma$-cyclic counterpart \cite{CO25} Theorem 1.2. For this purpose, we prepare several lemmata on free filtrations. First, we give a non-Archimedean analogue of \cite{Fuc15} Lemma 1/4.3:

\begin{lmm}
\label{filter equivalence}
Let $V \in \Ban(k)$. Set $\kappa \coloneqq \rank(V)$. If $\cf(\kappa) \geq \aleph_1$ holds, for any $(\cV_0,\cV_1) \in \FF(V)$, $\set{\alpha \in \kappa}{\cV_0(\alpha) = \cV_1(\alpha)}$ is a club set of $\kappa$.
\end{lmm}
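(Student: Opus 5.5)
Set $C \coloneqq \set{\alpha \in \kappa}{\cV_0(\alpha) = \cV_1(\alpha)}$. The plan is to show separately that $C$ is closed and unbounded in $\kappa$, following the classical proof of \cite{Fuc15} Lemma 1/4.3.

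\emph{Closedness.} Let $\alpha \in \kappa \cap \Lim$ with $C \cap \alpha$ unbounded in $\alpha$. For each $\beta \in \alpha$ there is a $\gamma \in C \cap \alpha$ with $\beta \leq \gamma$, so $\cV_0(\beta) \subset \cV_0(\gamma) = \cV_1(\gamma)$ by condition (2) of a free filtration. Hence $\bigcup_{\beta \in \alpha} \cV_0(\beta) \subset \bigcup_{\gamma \in C \cap \alpha} \cV_1(\gamma)$, and symmetrically. Therefore $\bigcup_{\beta \in \alpha} \cV_0(\beta) = \bigcup_{\beta \in \alpha} \cV_1(\beta)$. Taking closures and applying condition (3) gives $\cV_0(\alpha) = \cV_1(\alpha)$, i.e.\ $\alpha \in C$.

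\emph{Unboundedness.} This is the main step. The key claim is that for each $j \in \{0,1\}$ and $\beta \in \kappa$ there is a $\gamma \in \kappa$ with $\cV_j(\beta) \subset \cV_{1-j}(\gamma)$. Since $\cV_j(\beta) \in \cL_{< \kappa}(V)$, write $\cV_j(\beta) = \langle S \rangle$ with $\# S < \kappa$. By condition (4), each $v \in S$ lies in some $\cV_{1-j}(\gamma_v)$. If $\kappa$ is regular, $\gamma \coloneqq \sup_{v \in S} \gamma_v < \kappa$, and closedness of $\cV_{1-j}(\gamma)$ gives the claim.

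For singular $\kappa$ I expect trouble, since $\# S$ may reach $\cf(\kappa)$. There are two options. The first is to note that a $\kappa$-sequence of subspaces of rank $<\kappa$ whose union is $V$ is only sensible in the intended application where $\kappa$ is regular, and to treat the regular case as the real content. The second is to exploit that the filtrations are increasing with $\kappa$ many stages, so that a cofinality argument on ranks applies: ranks $\rank(\cV_{1-j}(\gamma))$ grow cofinally in $\kappa$. If neither works cleanly, I would add regularity of $\kappa$ as the working hypothesis.

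Given the claim, fix $\alpha_0 \in \kappa$ and define $\alpha_{n+1} > \alpha_n$ recursively, using the claim for $\cV_0$ when $n$ is even and for $\cV_1$ when $n$ is odd. This gives $\cV_0(\alpha_{2m}) \subset \cV_1(\alpha_{2m+1})$ and $\cV_1(\alpha_{2m+1}) \subset \cV_0(\alpha_{2m+2})$. Set $\alpha \coloneqq \sup_n \alpha_n$. Then $\alpha < \kappa$ because $\cf(\kappa) \geq \aleph_1$, which is exactly where that hypothesis enters, and $\alpha \in \Lim$. By the interleaving, $\bigcup_n \cV_0(\alpha_n) = \bigcup_n \cV_1(\alpha_n)$. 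Since $(\alpha_n)_n$ is cofinal in $\alpha$, condition (3) gives $\cV_0(\alpha) = \langle \bigcup_n \cV_0(\alpha_n) \rangle = \langle \bigcup_n \cV_1(\alpha_n) \rangle = \cV_1(\alpha)$. Thus $\alpha \in C$ and $\alpha \geq \alpha_0$, so $C$ is unbounded.
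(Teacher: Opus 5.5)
Your closedness argument and your back-and-forth for unboundedness are the paper's proof, which only says ``back-and-forth'' and refers to Fuchs. For regular $\kappa$ every step you write is correct.

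For singular $\kappa$ your suspicion is justified in the strongest form. Your key claim fails, and so does the lemma itself, so your second option (a cofinality argument on ranks) cannot be completed. The case is not vacuous either.

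Let $\kappa \coloneqq \aleph_{\omega_1}$ and $V \coloneqq \rC_0(\kappa,k)$ with canonical basis $e \colon \kappa \to V$. Let $D \coloneqq \set{\aleph_{\xi}}{\xi \in \omega_1}$, a set of cardinality $\aleph_1$ unbounded in $\kappa$. Put $\cV_0(\alpha) \coloneqq \langle e[\alpha] \rangle$ and $\cV_1(\alpha) \coloneqq \langle e[\alpha \cup D] \rangle$.
\begin{itemize}
\item Both are free, being closed spans of parts of an orthonormal Schauder basis.
\item Both have rank at most $\# \alpha + \aleph_1 < \kappa$, are increasing, and are continuous at limits.
\item Both exhaust $V$: every $v \in V$ has countable support, and this support is bounded in $\kappa$ because $\cf(\kappa) = \omega_1$.
\end{itemize}
Thus $\rank(V) = \kappa$ by Proposition \ref{uniqueness of dimension}, $\cf(\kappa) \geq \aleph_1$, and $(\cV_0,\cV_1) \in \FF(V)^2$. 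Yet for every $\alpha \in \kappa$ there is some $e(\aleph_{\xi})$ with $\aleph_{\xi} \geq \alpha$, and it lies in $\cV_1(\alpha) \setminus \cV_0(\alpha)$. So the set in the statement is empty. Here $\cV_1(0) = \langle e[D] \rangle$ lies in no $\cV_0(\gamma)$, which is exactly the failure of your key claim.

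The correct repair is your fallback: replace $\cf(\kappa) \geq \aleph_1$ by $\kappa \in \Reg$. With that hypothesis your proof is complete as written. This costs nothing, since the paper uses the lemma only in the implication (2)$\Rightarrow$(3) of Lemma \ref{exceptional set criterion}, where $\kappa \in \Reg$ is assumed. The paper's one-line proof tacitly needs regularity at precisely the point where your key claim does.
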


\begin{proof}
The closedness follows from the cocontinuity of $\cV_0$ and $\cV_1$. The unboundedness immediately follows from back-and-forth argument (cf.\ the proof of \cite{Fuc15} Lemma 1/4.3).
\end{proof}

In Abelian group theory, the following lifting property play an important role in the study of a filtration: For any subgroup $B$ of an Abelian group $A$, $B$ is a direct summand of $A$ if $A/B$ is free (cf.\ \cite{Fuc15} Theorem 3/1.5) or if $B$ is a pure subgroup of $A$ and $A/B$ is $\Sigma$-cyclic (cf.\ \cite{CO25} Theorem 4.9). Although a similar lifting property holds for a free Banach $k$-vector space if we consider bounded $k$-linear homomorphisms (cf.\ \cite{BM23} p.\ 430), the same does not hold for contracting $k$-linear homomorphisms. Therefore, we formulate an alternative notion instead of a lifting property.

\begin{dfn}
Let $V \in \Ban(k)$ with a closed $k$-linear subspace $W \subset V$. Set $\kappa \coloneqq \rank(V)$. A {\it free supplement of $W$ in $V$} is a free closed $k$-linear subspace $W^{\perp}$ such that the canonical morphism $W \oplus W^{\perp} \to V$ is an isomorphism. We say that $W$ is a {\it cofree direct summand of $V$} if it admits a free supplement in $V$, and is an {\it almost cofree direct summand of $V$} if for any $W' \in \cL_{< \kappa}(V)$ with $W \subset W'$, $W$ is a cofree direct summand of $W'$. We denote by $\CF(V)$ (resp.\ $\ACF(V)$) the set of cofree (resp.\ almost cofree) direct summands of $V$.
\end{dfn}

\begin{lmm}
\label{supplement isomophism}
Let $V \in \Ban(k)$ with a cofree direct summand $W$. For any free supplement $W^{\perp}$ of $W$ in $V$, the restriction $W^{\perp} \to V/W$ of the canonical projection $V \twoheadrightarrow V/W$ is an isomorphism.
\end{lmm}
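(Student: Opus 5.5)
We need to show that the restriction $\pi \upharpoonright W^{\perp} \colon W^{\perp} \to V/W$ of the canonical projection $\pi \colon V \twoheadrightarrow V/W$ is an isometric $k$-linear bijection. The plan uses only the hypothesis that the canonical morphism $W \oplus W^{\perp} \to V$, $(w,u) \mapsto w + u$, is an isomorphism, i.e.\ an isometric $k$-linear isomorphism. Here $W \oplus W^{\perp}$ is the completed direct sum of the two-element family, so its norm is $\n{(w,u)} = \max \ens{\n{w},\n{u}}$. The freeness of $W^{\perp}$ plays no role.

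First, $k$-linearity and contractivity are automatic, since $\pi$ is contracting for the quotient norm. Surjectivity: for $v \in V$, the isomorphism gives $w \in W$ and $u \in W^{\perp}$ with $v = w + u$, so $\pi(v) = \pi(u)$. Injectivity follows from isometry, since an isometric map has zero kernel.

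The main step is isometry, i.e.\ $\n{\pi(u)} = \n{u}$ for $u \in W^{\perp}$. One inequality, $\n{\pi(u)} \leq \n{u}$, is contractivity. For the converse, recall that $\n{\pi(u)} = \inf_{w \in W} \n{u + w}$. For each $w \in W$, the element $u + w$ is the image of $(w,u)$ under the canonical morphism, which is isometric. Hence
\be
\n{u + w} = \max \ens{\n{w},\n{u}} \geq \n{u}.
\ee
Taking the infimum over $w \in W$ gives $\n{\pi(u)} \geq \n{u}$. This is the only point where the isometric nature of the supplement decomposition, rather than mere algebraic directness, is used. It is also the step that would fail for merely bounded complements, which is the reason the notion of a free supplement was formulated with isomorphisms in the sense of this paper.

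Combining these, $\pi \upharpoonright W^{\perp}$ is a bijective isometric $k$-linear homomorphism. Its inverse is then automatically isometric, so it is an isomorphism in $\Ban(k)$. No step seems genuinely hard; the only care needed is to correctly identify the norm on $W \oplus W^{\perp}$ as the maximum norm coming from the completed direct sum convention in \S \ref{Convention}.
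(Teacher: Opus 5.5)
Your proof is correct and follows the paper's argument. The algebraic decomposition $V = W + W^{\perp}$ with $W \cap W^{\perp} = \ens{0}$ gives bijectivity; you derive injectivity from the isometry instead, which is equally fine. The identity $\n{u + w} = \max \ens{\n{w},\n{u}} \geq \n{u}$ for all $w \in W$ then shows that the quotient norm of $u + W$ equals $\n{u}$. Your inequality direction is the correct one (the paper's text writes $\leq$ at this step, which is a slip), and you are right that the freeness of $W^{\perp}$ is never used.
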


\begin{proof}
We denote by $\iota \colon W^{\perp} \to V/W$ the given morphism. The bijectivity of $\iota$ follows from the well-known fact on a direct summand in the algebraic setting. It suffices to show that $\iota$ is an isometry. Let $w' \in W^{\perp}$. For any $w \in W$, we have $\n{w + w'} = \max \ens{\n{w},\n{w'}} \leq \n{w'}$ by the definition of a free supplement. This implies $\n{w' + W} = \n{w'}$.
\end{proof}

\begin{lmm}
\label{cofree direct summand preserving}
Let $V \in \Ban(k)$ with a cofree direct summand $W$. For any closed $k$-linear subspace $W'$ of $V$ with $W \subset W'$, $W$ is a cofree direct summand of $W'$. 
\end{lmm}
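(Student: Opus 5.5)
The natural candidate for a free supplement of $W$ in $W'$ is the image of $W'$ under the projection onto a free supplement of $W$ in $V$. So fix a free supplement $W^{\perp}$ of $W$ in $V$ and let $p \colon V \to W^{\perp}$ be the projection along $W$. This is the composite of the canonical projection $V \twoheadrightarrow V/W$ with the inverse of the isomorphism $W^{\perp} \to V/W$ of Lemma \ref{supplement isomophism}, so $p$ is contracting and $p \upharpoonright W = 0$. Set
\be
W'' \coloneqq p[W'] = W' \cap \left( W + W^{\perp} \right) \cap W^{\perp}.
\ee
Since $W \subset W'$, for any $w' \in W'$ we have $w' - p(w') \in W \subset W'$, hence $p(w') \in W'$. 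It follows that $W'' = W' \cap W^{\perp}$, and this is a closed $k$-linear subspace because $W'$ and $W^{\perp}$ are closed in $V$.

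Next we check that the canonical morphism $W \oplus W'' \to W'$ is an isomorphism. It is the restriction of the isomorphism $W \oplus W^{\perp} \to V$, so it is isometric, and in particular injective with closed image. It is surjective because every $w' \in W'$ decomposes as $w' = (w' - p(w')) + p(w')$ with $w' - p(w') \in W$ and $p(w') \in W''$.

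It remains to show that $W''$ is free. This is the only non-formal step. Since $W''$ is a closed $k$-linear subspace of the free Banach $k$-vector space $W^{\perp}$, it is free by \cite{Sch99} Theorem 5 (ii), the same result used in Proposition \ref{free implies almost free}. Hence $W''$ is a free supplement of $W$ in $W'$, and $W$ is a cofree direct summand of $W'$.

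The main obstacle would be freeness of $W''$ if that theorem were unavailable. Everything else is algebra of direct sums together with the isometry furnished by Lemma \ref{supplement isomophism}.
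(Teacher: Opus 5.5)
Your proof is correct and is essentially the paper's argument. The paper takes $W^{\perp}_0 \coloneqq \iota^{-1}[W'/W]$, which is exactly your $p[W'] = W' \cap W^{\perp}$, and gets the isometry by restricting $W \oplus W^{\perp} \to V$; you are slightly more careful, since you explicitly check that this supplement is closed and free (via Schikhof's theorem), which the paper leaves implicit.
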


\begin{proof}
Take a free supplement $W^{\perp}$ of $W$ in $V$. We denote by $\iota \colon W^{\perp} \to V/W$ the isomorphism in Lemma \ref{supplement isomophism}. Set $W^{\perp}_0 \coloneqq \iota^{-1}[W'/W]$. We denote by $f$ the canonical morphism $W \oplus W^{\perp}_0 \to W'$. The bijectivity of $f$ follows from the well-known fact on a direct summand in the algebraic setting. Since $f$ is the restriction of the canonical isomorphism $W \oplus W^{\perp} \to V$, it is an isometry.
\end{proof}

\begin{lmm}
\label{almost cofree vs cofree}
Let $V \in \Ban(k)$ and $\cV \in \FF(V)$. Set $\kappa \coloneqq \rank(V)$. If $\kappa$ is regular, then for any $\alpha \in \kappa$, the following are equivalent:
\bi
\item[(1)] The relation $\cV(\alpha) \in \ACF(V)$ holds.
\item[(2)] For any $\alpha' \in \kappa \setminus \alpha$, the relation $\cV(\alpha) \in \CF(\cV(\alpha'))$ holds.
\item[(3)] The set $\set{\alpha' \in \kappa \setminus \alpha}{\cV(\alpha) \in \CF(\cV(\alpha'))}$ is unbounded in $\kappa$.
\ei
\end{lmm}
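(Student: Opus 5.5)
We prove the cycle of implications (1) $\Rightarrow$ (2) $\Rightarrow$ (3) $\Rightarrow$ (1). Fix $\alpha \in \kappa$.

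\emph{(1) $\Rightarrow$ (2).} Let $\alpha' \in \kappa \setminus \alpha$. By condition (2) in the definition of a free filtration, $\cV(\alpha) \subset \cV(\alpha')$. Since $\cV(\alpha') \in \cL_{< \kappa}(V)$, the definition of $\ACF(V)$ applied to $W' \coloneqq \cV(\alpha')$ gives $\cV(\alpha) \in \CF(\cV(\alpha'))$.

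\emph{(2) $\Rightarrow$ (3).} The set in (3) is then all of $\kappa \setminus \alpha$. This set is unbounded in $\kappa$ because $\kappa$ is a limit ordinal: $\kappa$ is regular, hence infinite, since $\rank(V) = \kappa$ and a free filtration is indexed by $\kappa$.

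\emph{(3) $\Rightarrow$ (1).} Let $W' \in \cL_{< \kappa}(V)$ with $\cV(\alpha) \subset W'$, and take $S \in \cP_{< \kappa}(V)$ with $W' = \langle S \rangle$. As in the proof of Proposition \ref{almost free}, for each $v \in S$ let $\alpha_v$ be the least ordinal with $v \in \cV(\alpha_v)$; it exists by condition (4) of a free filtration. By $\# S < \kappa$ and the regularity of $\kappa$, the ordinal $\beta \coloneqq \sup_{v \in S} \alpha_v$ lies in $\kappa$. By the unboundedness in (3), choose $\alpha' \in \kappa \setminus \alpha$ with $\alpha' \geq \beta$ and $\cV(\alpha) \in \CF(\cV(\alpha'))$. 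Then $S \subset \cV(\beta) \subset \cV(\alpha')$, and since $\cV(\alpha')$ is closed, $W' \subset \cV(\alpha')$. Also $W'$ is closed and contains $\cV(\alpha)$. Lemma \ref{cofree direct summand preserving}, applied inside the Banach space $\cV(\alpha')$, now gives $\cV(\alpha) \in \CF(W')$. Since $W'$ was arbitrary, $\cV(\alpha) \in \ACF(V)$.

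The only step needing real input is (3) $\Rightarrow$ (1), and its substance is Lemma \ref{cofree direct summand preserving}. One point needs checking there: the supplement it produces, $\iota^{-1}[W'/\cV(\alpha)]$, must be free. This holds because it is a closed $k$-linear subspace of the free space $\cV(\alpha')^{\perp}$, by \cite{Sch99} Theorem 5 (ii). The rest is bookkeeping with the regularity of $\kappa$, exactly as in Proposition \ref{almost free}.
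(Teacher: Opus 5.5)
Your proof is correct and follows essentially the paper's argument: (1)$\Rightarrow$(2) comes directly from the definition of $\ACF(V)$, (2)$\Rightarrow$(3) from the unboundedness of $\kappa \setminus \alpha$, and (3)$\Rightarrow$(1) from using regularity to place a generating set of $W'$ inside some $\cV(\alpha')$ with $\cV(\alpha) \in \CF(\cV(\alpha'))$ and then invoking Lemma \ref{cofree direct summand preserving}. Your added checks, namely the hypothesis $\cV(\alpha) \subset W'$ and the freeness of the supplement $\iota^{-1}[W'/\cV(\alpha)]$ as a closed subspace of a free space, fill small omissions in the paper's write-up.
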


\begin{proof}
The implication from (1) to (2) follows from $\cV(\alpha) \subset \cV(\alpha')$ and $\cV(\alpha') \in \cL_{< \kappa}(V)$. The implication from (2) to (3) follows from the unboundedness of $\kappa$ in $\kappa$ itself. Assume (3). Set $C \coloneqq \set{\alpha' \in \kappa \setminus \alpha}{\cV(\alpha) \in \CF(\cV(\alpha'))}$. Let $W \in \cL_{< \kappa}(V)$. Take an $S \in \cP_{< \kappa}(W)$ with $W = \langle S \rangle$. By $\# S < \kappa$, the regularity of $\kappa$, and the unboundedness of $C$ in $\kappa$, there exists an $\alpha' \in C$ such that $S \subset \cV(\alpha')$ and hence $W \subset \cV(\alpha')$. By $\alpha' \in C$, we have $\cV(\alpha) \in \CF(\cV(\alpha'))$. Therefore, we obtain $\cV(\alpha) \in \CF(W)$ by Lemma \ref{cofree direct summand preserving}.
\end{proof}

\begin{lmm}
\label{cofree characterisation}
Let $V \in \Ban(k)$. Set $\kappa \coloneqq \rank(V)$. If $\kappa \in \Reg$ and $\FF(V) \cap \ACF(V)^{\kappa} \neq \emptyset$, then $V$ is free.
\end{lmm}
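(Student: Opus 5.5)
Take a $\cV \in \FF(V) \cap \ACF(V)^{\kappa}$. The plan is to build, by transfinite recursion on $\alpha \in \kappa$, an increasing family of orthonormal sets $E_\alpha \subset V$ such that $E_\alpha$ is an orthonormal Schauder basis of $\cV(\alpha)$ for every $\alpha$. Then $E \coloneqq \bigcup_{\alpha \in \kappa} E_\alpha$ will be the desired orthonormal Schauder basis of $V$. This is the non-Archimedean analogue of the classical argument that a filtration whose successive quotients are free gives a free group.

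We may first replace $\cV$ by a version with $\cV(0) = 0$, which is harmless. Indeed, $\cV(0)$ is free by condition (1) of a free filtration, so we can simply start the recursion with $E_0$ an orthonormal Schauder basis of $\cV(0)$.

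At a successor stage $\alpha + 1$, we have $\cV(\alpha) \in \ACF(V)$ and $\cV(\alpha + 1) \in \cL_{<\kappa}(V)$ contains $\cV(\alpha)$. By the definition of $\ACF(V)$ (equivalently, by Lemma \ref{almost cofree vs cofree} (2)), $\cV(\alpha)$ is a cofree direct summand of $\cV(\alpha+1)$. So we choose a free supplement $W_\alpha^\perp$ with an orthonormal Schauder basis $F_\alpha$, and set $E_{\alpha+1} \coloneqq E_\alpha \sqcup F_\alpha$. That this is an orthonormal Schauder basis of $\cV(\alpha+1)$ follows from the canonical isomorphism $\cV(\alpha) \oplus W_\alpha^\perp \cong \cV(\alpha+1)$. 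The key point is that the completed direct sum $\rC_0(E_\alpha,k) \oplus \rC_0(F_\alpha,k)$ is isometric to $\rC_0(E_\alpha \sqcup F_\alpha,k)$ with the supremum norm.

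At a limit stage $\alpha$, set $E_\alpha \coloneqq \bigcup_{\beta \in \alpha} E_\beta$. Properties (1) and (2) of an orthonormal Schauder basis are local, since any $c \in \rC_0(E_\alpha,k)$ is a norm limit of finitely supported coefficient vectors, each supported in some $E_\beta$. So they pass to unions of an increasing chain, using continuity of the norm and of the summation map $\rC_0(E_\alpha,k) \to V$. For (3), the map $c \mapsto \sum c(e) e$ is an isometry $\rC_0(E_\alpha,k) \to V$. Its image is therefore closed, and it contains $\bigcup_{\beta<\alpha} \cV(\beta)$. By condition (3) of a free filtration the image equals $\cV(\alpha)$.

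The same argument applied to the final union $E$ shows that $E$ is orthonormal and spans a closed subspace containing $\bigcup_{\beta \in \kappa} \cV(\beta) = V$, using condition (4). Hence $V$ is free.

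I expect the main obstacle to be this limit/closure step, showing that isometry plus density yields surjectivity onto $\cV(\alpha)$. It is handled by completeness of $\rC_0(E_\alpha,k)$. A second, routine point is that the recursion requires choices of supplements at every successor stage, which is fine by the axiom of choice. Regularity of $\kappa$ is used only implicitly, through the use of Lemma \ref{almost cofree vs cofree} at successor stages.
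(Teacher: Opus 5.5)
Your proof is correct and follows essentially the same route as the paper: a transfinite recursion along $\cV$, using a free supplement of $\cV(\alpha)$ in $\cV(\alpha+1)$ at successor stages, and condition (3) of a free filtration together with completeness at limit stages. The paper packages this as a natural isomorphism onto $\cV$ from the canonical filtration of $\bigoplus_{\alpha} W_{\alpha}$, where the $W_{\alpha}$ are the successive quotients identified with the supplements via Lemma \ref{supplement isomophism}; you instead concatenate orthonormal Schauder bases directly, which is the same argument with lighter bookkeeping (and your successor step in fact follows straight from the definition of $\ACF(V)$, without needing regularity).
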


\begin{proof}
Take a $\cV \in \FF(V) \cap \ACF(V)^{\kappa}$. Let $\alpha \in \kappa$. By $\cV(\alpha) \in \ACF(V)$ and Lemma \ref{almost cofree vs cofree}, we have $\cV(\alpha) \in \CF(\cV(\alpha + 1))$. Take a free supplement $\cV(\alpha)^{\perp}$ of $\cV(\alpha)$ in $\cV(\alpha + 1)$. We denote by $\iota_{\alpha}$ the isomorphism $V(\alpha)^{\perp} \to \cV(\alpha + 1)/\cV(\alpha)$ in Lemma \ref{supplement isomophism}. Since $V(\alpha)^{\perp}$ is free, so is $\cV(\alpha + 1)/\cV(\alpha)$.

\vs
For each $\alpha \in \kappa$, set
\be
W_{\alpha} \coloneqq 
\left\{
\begin{array}{ll}
\cV(0) & (\alpha = 0) \\
\cV(\alpha)/\cV(\alpha^{-}) & (\alpha \in \Suc) \\
\ens{0} & (\alpha \in \Lim)
\end{array}
\right..
\ee
Then $W_{\alpha}$ is a free Banach $k$-vector space of rank $< \kappa$ by the argument above for any $\alpha \in \kappa$. Set
\be
W \coloneqq \bigoplus_{\alpha \in \kappa} W_{\alpha}.
\ee
We define a functor $\cW \colon \kappa \to \Ban(k)$ by
\be
\cW(\alpha) \coloneqq \set{w \in W}{\forall \alpha' \in \kappa \setminus \alpha[w(\alpha') = 0]}
\ee
and inclusions. Since $W$ is the completed direct sum of free Banach $k$-vector spaces, $W$ is free. For any $\alpha \in \kappa$, we have $\cW(\alpha) \cong \bigoplus_{\beta \in \alpha} W_{\beta}$, and hence $\cW(\alpha)$ is a free Banach $k$-vector space with
\be
\rank(\cW(\alpha)) \leq \sum_{\beta \in \alpha} \rank(\cW(\beta)) < \kappa
\ee
by the regularity of $\kappa$. By the definition, $\cW$ is cocontinuous. By Proposition \ref{completeness of aleph_1-directed limit}, the canonical morphism $\varinjlim (F_{\Comp} \circ \cW) \to W$ is an isomorphism. Therefore, we obtain $\cW \in \FF(W)$. Since $W$ is free, it suffices to construct a natural isomorphism $\Phi \colon \cW \Rightarrow \cV$ in a recursive way.

\vs
Let $\alpha \in \kappa$. Suppose that we have constructed $\Phi \upharpoonright \alpha$ as a natural isomorphism $\cW \upharpoonright \alpha \Rightarrow \cV \upharpoonright \alpha$. If $\alpha = 0$, then we define $\Phi(\alpha)$ as the restriction $\cW(0) \to \cV(0)$ of the canonical projection $W \twoheadrightarrow \cV(0)$, and have constructed $\Phi \upharpoonright (\alpha + 1)$ as a natural isomorphism $\cW \upharpoonright (\alpha + 1) \Rightarrow \cV \upharpoonright (\alpha + 1)$.

\vs
Suppose $\alpha \in \Suc$. We denote by $\sigma_0$ the isomorphism $\bigoplus_{\beta \in \alpha^{-}} W_{\beta} \to \cW(\alpha^{-})$ given as the zero extension, by $\sigma_1$ the zero extension $W_{\alpha^{-}} \hookrightarrow \cW(\alpha)$, and by $\pi$ the canonical projection $\cV(\alpha) \twoheadrightarrow \cV(\alpha)/\cV(\alpha^{-}) = W_{\alpha}$. Let $w \in \cW(\alpha)$. Set
\be
w_0 & \coloneqq & \sigma_0(w \upharpoonright \alpha^{-}) \\
w_1 & \coloneqq & w(\alpha^{-}) \\
v & \coloneqq & \Phi(\alpha^{-})(w_0) + \iota_{\alpha^{-}}^{-1}(w_1).
\ee
We have
\be
\pi(v) = \pi(\Phi(\alpha^{-})(w_0)) + \pi(\iota_{\alpha^{-}}^{-1}(w_1)) = 0 + w_1 = w_1 \in V(\alpha)/V(\alpha^{-})
\ee
and
\be
\pi(v - \iota_{\alpha^{-}}^{-1}(\pi(v))) = \pi(v) - \pi(\iota_{\alpha^{-}}^{-1}(w_1)) = w_1 - w_1 = 0.
\ee
This implies $v \in \pi^{-1}[V(\alpha)/V(\alpha^{-})] = V(\alpha)$ and $v - \iota_{\alpha^{-}}^{-1}(\pi(v)) \in \ker(\pi) = V(\alpha^{-})$. In particular, $v - \iota_{\alpha^{-}}^{-1}(\pi(v))$ belongs to the domain of $\Phi(\alpha^{-})^{-1}$, and we have
\be
\Phi(\alpha^{-})^{-1}(v - \iota_{\alpha^{-}}^{-1}(\pi(v))) = \Phi(\alpha^{-})^{-1}(v - \iota_{\alpha^{-}}^{-1}(w_1)) = \Phi(\alpha^{-})^{-1}(\Phi(\alpha^{-})(w_0)) = w_0.
\ee
Therefore, the assignments
\be
w & \mapsto & \Phi(\alpha^{-})(\sigma_0(w \upharpoonright \alpha^{-})) + \iota_{\alpha^{-}}^{-1}(w(\alpha^{-})) \\
v & \mapsto & \Phi(\alpha^{-})^{-1}(v - \iota_{\alpha^{-}}^{-1}(\pi(v))) + \sigma_1(\pi(v))
\ee
define morphisms $\Phi_{\alpha} \colon \cW(\alpha) \to \cV(\alpha)$ and $\Psi_{\alpha} \colon \cV(\alpha) \to \cW(\alpha)$ with $\Psi_{\alpha} \circ \Phi_{\alpha} = \id_{\cW(\alpha)}$. We show $\Phi_{\alpha} \circ \Psi_{\alpha} = \id_{\cV(\alpha)}$. Let $v \in \cV(\alpha)$. We have
\be
\Phi_{\alpha}(\Psi_{\alpha}(v)) & = & \Phi_{\alpha}(\Phi(\alpha^{-})^{-1}(v - \iota_{\alpha^{-}}^{-1}(\pi(v))) + \sigma_1(\pi(v))) \\
& = & \Phi(\alpha^{-})(\Phi(\alpha^{-})^{-1}(v - \iota_{\alpha^{-}}^{-1}(\pi(v)))) + \iota_{\alpha^{-}}^{-1}(\pi(v)) \\
& = & (v - \iota_{\alpha^{-}}^{-1}(\pi(v))) + \iota_{\alpha^{-}}^{-1}(\pi(v)) = v.
\ee
This implies $\Phi_{\alpha} \circ \Psi_{\alpha} = \id_{\cV(\alpha)}$, and hence $\Phi_{\alpha}$ is an isomorphism $\cW(\alpha) \to \cV(\alpha)$. We define $\Phi(\alpha) \coloneqq \Phi_{\alpha}$, and have constructed $\Phi \upharpoonright (\alpha + 1)$ as a natural isomorphism $\cW \upharpoonright (\alpha + 1) \Rightarrow \cV \upharpoonright (\alpha + 1)$.

\vs
Suppose $\alpha \in \Lim$. Since $\cW(\alpha)$ and $\cV(\alpha)$ are the closures of $\bigcup_{\beta \in \alpha} \cW(\beta)$ in $W$ and $\bigcup_{\beta \in \alpha} \cV(\beta)$ in $V$, the compatible system $\Phi \upharpoonright \alpha$ of isomorphisms defines an isomorphism $\Phi(\alpha) \colon \cW(\alpha) \to \cV(\alpha)$. We have constructed $\Phi \upharpoonright (\alpha + 1)$ as a natural isomorphism $\cW \upharpoonright (\alpha + 1) \Rightarrow \cV \upharpoonright (\alpha + 1)$.
\end{proof}

\begin{dfn}
Let $V \in \Ban(k)$ and $\cV \in \FF(V)$. Set $\kappa \coloneqq \rank(V)$. We set
\be
E_{\cV} \coloneqq \set{\alpha \in \kappa}{\cV(\alpha) \notin \ACF(V)}.
\ee
For an $\alpha \in \kappa$, we set
\be
S_{\cV,\alpha} \coloneqq \set{\alpha' \in \kappa \setminus \alpha}{\cV(\alpha) \notin \CF(\cV(\alpha'))}.
\ee
\end{dfn}

We give a non-Archimedean analogue of \cite{CO25} Remark 4.9.

\begin{lmm}
\label{exceptional set characterisation}
Let $V \in \Ban(k)$ and $\cV \in \FF(V)$. Set $\kappa \coloneqq \rank(V)$. If $\kappa$ is regular, then for any $\alpha \in \kappa$, the following are equivalent:
\bi
\item[(1)] The relation $\alpha \in E_{\cV}$ holds.
\item[(2)] The set $S_{\cV,\alpha}$ is stationary in $\kappa$.
\ei
\end{lmm}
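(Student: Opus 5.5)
The plan is to show that $S_{\cV,\alpha}$ is always a final segment of $\kappa \setminus \alpha$. Then it is stationary if and only if it is non-empty, and non-emptiness is exactly $\alpha \in E_{\cV}$ by Lemma \ref{almost cofree vs cofree}.

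\emph{Step 1 (upward closedness).} Let $\alpha' \in S_{\cV,\alpha}$ and $\alpha'' \in \kappa$ with $\alpha' \leq \alpha''$; we show $\alpha'' \in S_{\cV,\alpha}$. Suppose instead $\cV(\alpha) \in \CF(\cV(\alpha''))$. By condition (2) of a free filtration, $\cV(\alpha')$ is a closed $k$-linear subspace of $\cV(\alpha'')$ containing $\cV(\alpha)$. Lemma \ref{cofree direct summand preserving} then gives $\cV(\alpha) \in \CF(\cV(\alpha'))$, contradicting $\alpha' \in S_{\cV,\alpha}$. Hence $S_{\cV,\alpha}$ is either empty or equal to $\kappa \setminus \gamma$, where $\gamma$ is its minimum.

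\emph{Step 2 (stationarity versus non-emptiness).} The empty set is not stationary, since $\kappa$ itself is a club set. If $S_{\cV,\alpha} = \kappa \setminus \gamma$ with $\gamma \in \kappa$, it meets every club set $C$ of $\kappa$, because $C$ is unbounded and so has an element $\geq \gamma$. So it is stationary; indeed for $\cf(\kappa) \geq \aleph_1$ it is itself club. Thus (2) is equivalent to $S_{\cV,\alpha} \neq \emptyset$.

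\emph{Step 3 (linking to $E_{\cV}$).} By the regularity of $\kappa$, Lemma \ref{almost cofree vs cofree} applies, and its equivalence (1)$\Leftrightarrow$(2) says: $\cV(\alpha) \in \ACF(V)$ if and only if $\cV(\alpha) \in \CF(\cV(\alpha'))$ for every $\alpha' \in \kappa \setminus \alpha$. Negating both sides, $\alpha \in E_{\cV}$ if and only if some $\alpha' \in \kappa \setminus \alpha$ lies in $S_{\cV,\alpha}$, that is, $S_{\cV,\alpha} \neq \emptyset$. Combined with Step 2, this gives the equivalence of (1) and (2).

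The only step needing care is Step 1, namely that Lemma \ref{cofree direct summand preserving} applies to the intermediate space $\cV(\alpha')$. This holds because members of $\cL_{< \kappa}(V)$ are closed by definition, and the filtration is increasing. The rest is bookkeeping about club and stationary sets, which works uniformly for regular $\kappa$.
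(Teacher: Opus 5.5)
Your argument is correct, but it is organised differently from the paper's.

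The paper never examines the shape of $S_{\cV,\alpha}$. For the easy direction, if $\cV(\alpha)\in\ACF(V)$ then $S_{\cV,\alpha}=\emptyset$ by (1)$\Rightarrow$(2) of Lemma \ref{almost cofree vs cofree}. For the converse, if $S_{\cV,\alpha}$ is non-stationary, the paper picks a club $C$ disjoint from it and notes that $\cV(\alpha)\in\CF(\cV(\alpha'))$ for every $\alpha'\in C\setminus\alpha$. It then concludes $\cV(\alpha)\in\ACF(V)$ from the unboundedness criterion (3)$\Rightarrow$(1) of the same lemma.

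You instead apply Lemma \ref{cofree direct summand preserving} along the filtration to show that $S_{\cV,\alpha}$ is upward closed, hence either empty or a final segment $\kappa\setminus\gamma$. Stationarity then collapses to non-emptiness, and you only need the equivalence (1)$\Leftrightarrow$(2) of Lemma \ref{almost cofree vs cofree}.

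The ingredients are the same. The paper's proof of (3)$\Rightarrow$(1) uses Lemma \ref{cofree direct summand preserving} in the same way, and in both versions the regularity of $\kappa$ is used only for the direction $\alpha\in E_{\cV}\Rightarrow$ (2).

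Your route extracts more information, though: $S_{\cV,\alpha}$ is always either empty or club. So ``stationary'' in the statement could just as well be replaced by ``non-empty'' or ``unbounded'', which shows that the stationarity condition is degenerate in this setting. The paper's version is shorter given the preceding lemma, and it keeps the formal shape of the Eklof--Shelah-type argument it imitates.

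A small point: a non-empty final segment is club for every infinite regular $\kappa$, so the restriction $\cf(\kappa)\geq\aleph_1$ in your Step 2 is unnecessary.
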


\begin{proof}
It suffices to show that $\cV(\alpha) \in \ACF(V)$ holds if and only if $S_{\cV,\alpha}$ is not stationary in $\kappa$. For this purpose, it suffices to show that $\cV(\alpha) \in \ACF(V)$ holds if $S_{\cV,\alpha}$ is not stationary in $\kappa$ by Lemma \ref{almost cofree vs cofree}.

\vs
Since $S_{\cV,\alpha}$ is not stationary in $\kappa$, there exists a club set $C$ of $\kappa$ such that $S_{\cV,\alpha} \cap C = \emptyset$. This implies that for any $\alpha' \in C \setminus \alpha$, $\cV(\alpha) \in \CF(\cV(\alpha'))$ holds. Since $C$ is unbounded, we have $\cV(\alpha) \in \ACF(V)$ by Lemma \ref{almost cofree vs cofree}.
\end{proof}

We give a lemma which plays a role similar to Eklof--Shelah criterion (cf.\ \cite{Fuc15} Theorem 3/7.5 for free Abelian groups and \cite{CO25} Theorem 4.11 for $\Sigma$-cyclic Abelian groups), which is originally based on lifting properties of quotient homomorphisms:

\begin{lmm}
\label{exceptional set criterion}
Let $V \in \Ban(k)$. Set $\kappa \coloneqq \rank(V)$. If $\# V = \rank(V)$ and $\kappa \in \Reg$, then the following are equivalent:
\bi
\item[(1)] The Banach $k$-vector space $V$ is free.
\item[(2)] There exists a $\cV \in \FF(V)$ such that $E_{\cV}$ is not stationary in $\kappa$.
\item[(3)] The Banach $k$-vector space $V$ is almost free, and for any $\cV \in \FF(V)$, $E_{\cV}$ is not stationary in $\kappa$.
\ei
\end{lmm}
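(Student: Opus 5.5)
I would prove the cycle (1) $\Rightarrow$ (3) $\Rightarrow$ (2) $\Rightarrow$ (1).

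\textbf{(3) $\Rightarrow$ (2).} By Proposition \ref{almost free}, almost freeness together with $\# V = \kappa$ and the regularity of $\kappa$ gives $\FF(V) \neq \emptyset$. Any $\cV \in \FF(V)$ then witnesses (2).

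\textbf{(2) $\Rightarrow$ (1).} Take $\cV \in \FF(V)$ with $E_{\cV}$ non-stationary, and a club $C \subset \kappa$ with $C \cap E_{\cV} = \emptyset$. By Proposition \ref{club restriction}, $\cV' \coloneqq \cV \circ \iota$ is a free filtration, where $\iota \colon \kappa \to C$ is the enumeration. For every $\alpha$, $\iota(\alpha) \notin E_{\cV}$, so $\cV'(\alpha) = \cV(\iota(\alpha)) \in \ACF(V)$. Hence $\cV' \in \FF(V) \cap \ACF(V)^{\kappa}$, and Lemma \ref{cofree characterisation} shows $V$ is free. This step is immediate.

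\textbf{(1) $\Rightarrow$ (3).} Almost freeness follows from Proposition \ref{free implies almost free}. Now fix an arbitrary $\cV \in \FF(V)$ and an orthonormal Schauder basis $E$ of $V$; by Proposition \ref{uniqueness of dimension}, $\# E = \kappa$.

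1. Enumerate $E$ as $(e_\gamma)_{\gamma \in \kappa}$ and set $\cW(\alpha) \coloneqq \langle \set{e_\gamma}{\gamma \in \alpha} \rangle$. This is in $\cL_{< \kappa}(V)$, is free, is cocontinuous (the closure of an increasing union of coordinate subspaces is the coordinate subspace of the union), and exhausts $V$ by Proposition \ref{completeness of aleph_1-directed limit} (2). So $\cW \in \FF(V)$.

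2. Each $\cW(\alpha)$ is a cofree direct summand of $V$: its free supplement is $\langle \set{e_\gamma}{\gamma \in \kappa \setminus \alpha} \rangle$, and orthonormality makes the canonical map from the completed direct sum isometric. By Lemma \ref{cofree direct summand preserving}, $\cW(\alpha) \in \ACF(V)$, so $E_{\cW} = \emptyset$.

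3. Lemma \ref{filter equivalence} requires $\cf(\kappa) \geq \aleph_1$, which holds since $\kappa \in \Reg$. It gives a club $D$ on which $\cV = \cW$. For $\alpha \in D$ we have $\cV(\alpha) = \cW(\alpha) \in \ACF(V)$. Thus $E_{\cV} \cap D = \emptyset$, and $E_{\cV}$ is non-stationary.

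The main obstacle I expect is step 1 of (1) $\Rightarrow$ (3): checking cocontinuity at limit stages and that each $\cW(\alpha)$ has rank $< \kappa$. These follow from orthonormality (coordinates of a limit of elements supported in $\alpha$ remain supported in $\alpha$) and from the regularity of $\kappa$. Verifying that the supplement is isometric, i.e.\ $\n{w + w'} = \max \ens{\n{w}, \n{w'}}$, is a direct computation with $\rC_0$-coefficients.
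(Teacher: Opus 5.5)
Your proof is correct and uses the same ingredients as the paper. These are the basis-induced filtration with empty exceptional set, Lemma \ref{filter equivalence} to transfer non-stationarity to an arbitrary free filtration, and Proposition \ref{club restriction} with Lemma \ref{cofree characterisation} to return to freeness. The only difference is that you run the cycle as (1) $\Rightarrow$ (3) $\Rightarrow$ (2) $\Rightarrow$ (1): you merge the paper's (1) $\Rightarrow$ (2) and (2) $\Rightarrow$ (3) into one step, and (3) $\Rightarrow$ (2) becomes trivial via Proposition \ref{almost free}.
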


\begin{proof}
Assume (1). Take an orthonormal Schauder basis $E \subset V$ of $V$. By Proposition \ref{uniqueness of dimension}, we have $\# E = \kappa$. Take a bijective map $e \colon \kappa \to E$. We define a functor $\cV \colon \kappa \to \Ban(k)$ by
\be
\cV(\alpha) \coloneqq \langle e[\alpha] \rangle.
\ee
For any $\alpha \in \kappa$, the map
\be
\rC_0(\alpha,k) & \to & \cV(\alpha) \\
f & \mapsto & \sum_{\beta \in \alpha} f(\beta) e(\beta)
\ee
is an isomorphism, and hence $\cV(\alpha)$ is free. By the definition, $\cV$ is cocontinuous. By Proposition \ref{completeness of aleph_1-directed limit} (2) and Proposition \ref{L is directed}, the canonical morphism $\varinjlim_{\alpha \in \kappa} (F_{\Comp} \circ \cV(\alpha)) \to V$ is an isomorphism. Therefore, we obtain $\cV \in \FF(V)$. For any $\alpha \in \kappa$, the map
\be
\rC_0(\kappa \setminus \alpha,k) & \to & V \\
f & \mapsto & \sum_{\beta \in \kappa \setminus \alpha} f(\beta) e(\beta)
\ee
is an isometry onto a free supplement of $\cV(\alpha)$ in $V$, and hence $\alpha \in \ACF(V)$ holds by Lemma \ref{cofree direct summand preserving}. Therefore, we have $E_{\cV} = \emptyset$. This implies that $E_{\cV}$ is not stationary in $\kappa$.

\vs
Assume (2). Take a $\cV_0 \in \FF(V)$ such that $\set{\alpha \in \kappa}{\cV_0(\alpha) \notin \ACF(V)}$ is not stationary in $\kappa$. By Proposition \ref{almost free}, $V$ is almost free. Let $\cV_1 \in \FF(V)$. Set $C \coloneqq \set{\alpha \in \kappa}{\cV_0(\alpha) = \cV_1(\alpha)}$.

\vs
By Lemma \ref{filter equivalence}, $C$ is a club set of $\kappa$. Since $E(\cV_0)$ is not stationary in $\kappa$, there exists a club set $C'$ of $\kappa$ such that $C' \cap E(\cV_0) = \emptyset$. We have
\be
(C \cap C') \cap E(\cV_1) = C' \cap (C \cap E(\cV_1)) = C' \cap (C \cap E(\cV_0)) \subset C' \cap E(\cV_0) = \emptyset,
\ee
and hence $E(\cV_1)$ is not stationary in $\kappa$ because $C \cap C'$ is a club set of $\kappa$ (cf.\ \cite{Fuc15} Lemma 1/4.1).

\vs
Assume (3). Take a $\cV \in \FF(V)$, which exists by Proposition \ref{almost free}. Since $E_{\cV}$ is not stationary in $\kappa$, there exists a club set $C$ of $\kappa$ such that $E_{\cV} \cap C = \emptyset$. We denote by $\cV'$ the composite functor $\kappa \to \cL_{< \kappa}(V)$ of the enumeration $\kappa \to C$ and $V$. By Proposition \ref{club restriction}, we have $\cV' \in \FF(V)$. By $E_{\cV} \cap C = \emptyset$, we have $\cV' \in \ACF(V)^{\kappa}$. Therefore, $V$ is free by Lemma \ref{cofree characterisation}.
\end{proof}

\begin{proof}[Proof of Theorem \ref{weakly compact}]
Set $\kappa \coloneqq \rank(V)$. By Proposition \ref{free implies almost free}, (1) implies (2). Assume (2). Take a $\cV \in \FF(V)$, which exists by Proposition \ref{almost free}. By $\kappa = 1 + \kappa$, we may assume $\cV(0) = \ens{0}$.

\vs
Assume that $V$ is not free. Then $E_{\cV}$ is stationary in $\kappa$ by Lemma \ref{exceptional set criterion}. For each $\alpha \in \kappa$, set
\be
S'_{\alpha} \coloneqq
\left\{
\begin{array}{ll}
S_{\cV,\alpha} & (\alpha \in E_{\cV}) \\
E_{\cV} & (\alpha \notin E_{\cV})
\end{array}
\right..
\ee
By Lemma \ref{exceptional set characterisation}, $S'_{\alpha}$ is stationary in $\kappa$ for any $\alpha \in \kappa$. By the stationary reflection property of $\kappa$ (cf.\ \cite{CO25} Lemma 4.1), there exists a $T \in \cP(\kappa)$ satisfying the following:
\bi
\item[(3)] The set $T$ is stationary in $\kappa$.
\item[(4)] For any $\lambda \in T$, $\lambda \in \Reg$ holds.
\item[(5)] For any $\lambda \in T$ and $\alpha \in \lambda$, $S'_{\alpha} \cap \lambda$ is stationary in $\lambda$.
\ei
Take a $\lambda \in T$, which exists by (3). For any $\alpha \in \kappa$, we have $\cV(0) = \ens{0} \in \CF(\cV(\alpha))$, and hence $\alpha \notin S_{\cV,0}$. This implies $S_{\cV,0} = \emptyset$, and hence $0 \notin E_{\cV}$ by Lemma \ref{exceptional set characterisation}. Therefore, we have $S'_0 = E_{\cV}$, and hence $E_{\cV} \cap \lambda$ is stationary in $\lambda$ by (5) applied to $\alpha = 0$. By (4), Proposition \ref{completeness of aleph_1-directed limit} (2), and Proposition \ref{L is directed}, we have $\cV(\lambda) = \bigcup_{\alpha \in \lambda} \cV(\alpha)$, and hence $\cV \upharpoonright \lambda \in \FF(\cV(\lambda))$. Let $\alpha \in E_{\cV} \cap \lambda$. We have
\be
S_{\cV \upharpoonright \lambda,\alpha} = S_{\cV,\alpha} \cap \lambda = S'_{\alpha} \cap \lambda
\ee
by $\alpha \in E_{\cV}$, and hence $S_{\cV \upharpoonright \lambda}$ is stationary in $\lambda$ by (5). Therefore, we obtain $\alpha \in E_{\cV \upharpoonright \lambda}$ by Lemma \ref{exceptional set characterisation}. This implies $E_{\cV} \cap \lambda \subset E_{\cV \upharpoonright \lambda}$. Since $E_{\cV} \cap \lambda$ is stationary in $\lambda$, so is $E_{\cV \upharpoonright \lambda}$. This contradicts that $\cV(\lambda)$ is free by (4), $\cV \upharpoonright \lambda \in \FF(\cV(\lambda))$, and Lemma \ref{exceptional set criterion}. We conclude that $V$ is free.
\end{proof}

%% file: References.tex
% \newpage
\vspace{0.3in}
\addcontentsline{toc}{section}{Acknowledgements}
\noindent {\Large \bf Acknowledgements}
\vspace{0.2in}

\noindent
I thank K.\ Eda for introducing to me the preceding studies of Specker phenomenon and Reid class, because these led to my study of almost free groups. I thank all people who helped me to learn mathematics and programming. I also thank my family.